\documentclass[a4paper, 10pt]{article}
\usepackage[utf8]{inputenc}

\usepackage{color}


\usepackage{amsmath} 
\usepackage{amssymb} 
\usepackage{amsthm}
\usepackage{graphicx}
\usepackage[colorlinks=true,linkcolor=blue]{hyperref}
\usepackage{hyphenat}

\theoremstyle{plain}

\newtheorem{prop}{Proposition}[section]

\newtheorem{lem}{Lemma}[section]

\theoremstyle{remark}
\newtheorem{rmk}{Remark}

\newcommand {\R} {\mathbb{R}} 
 \newcommand {\N} {\mathbb{N}}

\newcommand {\D} {\Delta}

\newcommand {\dt} {\partial_t}

\newcommand{\va}{\varphi}
\newcommand{\Om}{\Omega}

\hyphenation{an-i-so-tro-pic}
\hyphenation{pa-ra-bo-lic}

\DeclareGraphicsExtensions{.jpg,.pdf,.pdftex,.eps,.png, .ps}

\DeclareMathOperator {\supp} { supp}

\hyphenation{tem-pe-ra-ture--}
\newcommand{\La}{\mathcal{L}}
\hyphenation{an-i-so-tro-pic}

\pagestyle{headings}

\begin{document}

\title{On the Backward Uniqueness Property for the Heat Equation in Two-Dimensional Conical Domains}
\author{Angkana R\"uland\footnote{This work is part of the PhD thesis of the author written under the supervision of Prof. Herbert Koch to whom she owes great gratitude for his persistent support and advice. She thanks the Deutsche Telekomstiftung and the Hausdorff Center for Mathematics for financial support. }\footnote{Mathematisches Institut, Universit\"at Bonn, Endenicher Allee 60, 53115 Bonn, Germany, rueland@math.uni-bonn.de, +49/228/7362259 \newline \textit{MSC (2010)}: Primary 35K05, Secondary 35R45, 35A02.}}
\maketitle

\begin{abstract}
In this article we deal with the backward uniqueness property of the heat equation in conical domains in two spatial dimensions via Carleman inequality techniques. Using a microlocal interpretation of the pseudoconvexity condition, we improve the bounds of \v{S}ver\'ak and Li \cite{LiSv} on the minimal angle in which the backward uniqueness property is displayed: We reach angles of slightly less than $95^{\circ}$. Via two-dimensional limiting Carleman weights we obtain the uniqueness of possible controls of the heat equation with lower order perturbations in conical domains with opening angles larger than $90^{\circ}$.
\end{abstract}

\tableofcontents

\section{Introduction}

In the sequel we will be concerned with controllability properties of the heat equation. We will focus on the so called ``backward uniqueness property" for the heat equation. This deals with the question of whether
\begin{equation}
\begin{split}
\label{eq:BUCP}
(\dt - \D)u &= Vu+ W\cdot \nabla u \mbox{ in }\Omega\times (0,1),\\
u(t=1,x) &= 0 \mbox{ in } \Omega,
\end{split}
\end{equation}
already implies $u\equiv 0$ in $\Omega \times (0,1)$ for appropriate choices of the potentials $V$ and $W$. In particular,  the validity of the backward uniqueness property entails that there are no nontrivial initial and boundary data such that $u$ satisfies (\ref{eq:BUCP}). Due to the linearity of the heat equation such a property can be interpreted ``causally": Only a single possible choice of data can lead to a specific final state of a system if it is evolved by the heat equation. In other words the ``final state determines its past". Such a property would, for example, entail that if the temperature distributions of two objects agree at a given point in time, the history of the temperature distributions must have been identical at all previous times. From physical experience, e.g. heating a plate, one would not expect such a behaviour (for objects of finite size).\\

Controllability properties of the heat equation have been thoroughly investigated in \emph{bounded} domains, c.f. \cite{LR}, \cite{Z1}, \cite{Z2}, \cite{FR}, \cite{Rus}: In this case the heat equation is null-controllable, i.e. any $L^2$ initial datum can be driven to zero. In this situation there are various approaches relying on Carleman estimates, eigenfunction estimates, the method of moments and observability inequalities. \\
In the case of \emph{unbounded} domains the problem is less transparent. It is known that there are two regimes which both distinctly different from the setting in bounded domains as the equation is no longer null-controllable: 
\begin{itemize}
\item In the case of ``small" angles ($\theta < \frac{\pi}{2}$) there exist initial data which can be driven to zero (c.f. Section \ref{sec:examples}). 
\item In the ``large" angle regime the equation features the backward uniqueness property. This means that it becomes impossible to diffuse the information from the boundary into the interior sufficiently fast.
\end{itemize}

In the sequel we concentrate on the large angle regime. The understanding of this regime is rather incomplete: Although heuristics suggest that the backward uniqueness property should hold for all angles larger than (or equal to) $\theta= \frac{\pi}{2}$, this has not been proven. In part, this lack of a full understanding certainly originates from the difficulty of deriving sufficiently strong \emph{lower} bounds for the solution of the heat equation. All approaches have to rule out possible oscillation and cancellation effects. In spite of this incomplete picture, there are various partial results, c.f. \cite{LiSv}, \cite{MZ1}, \cite{MZ2}, \cite{M}.  The strongest result is given by Li and \v{S}ver\'ak \cite{LiSv} who employ Carleman techniques in order to derive the backward uniqueness property for the heat equation in conical domains with opening angles of down to approximately $109^{\circ}$. However, the underlying Carleman weight does not have sufficient convexity properties in order to carry the estimate beyond this number.\\

The aim of the present article is to further study the ``large" angle regime and to derive better bounds for the critical angle in two space dimensions.
Investigating the necessary  properties of weight functions of Carleman estimates, it is possible to give a condition guaranteeing pseudoconvexity -- i.e. admissibility -- for a larger class of weight functions in two dimensions. With these it is possible to reach angles of approximately $95^{\circ}$. \\

The guiding intuition behind these estimates is provided by the time\hyp independent case: For lower order perturbations of the Laplacian, Carleman estimates hold down to an angle of $90^{\circ}$ in the two-dimensional case. Thus, these estimates provide backward uniqueness properties for the heat equation if additionally $u(0,\cdot)=0$ is assumed (c.f. Proposition \ref{prop:backwaruniquness-}).
The general case, however, is much more difficult to handle, as any Carleman estimate for the heat equation in conical domains can be thought of a deformation of an elliptic weight (at time frequency zero).

\subsection{Main Results}
\label{sec:results}

In the sequel we prove the backward uniqueness property of the heat equation in conical domains with opening angles larger than $95^{\circ}$. As in the results of \v{S}ver\'ak et al. \cite{LiSv}, \cite{ESS}, this property is a consequence of an application of the following Carleman inequality in two spatial dimensions. 
Using the notation
\begin{align*}
\Omega_{\theta}= \left\{ (x_{1},x_{2})\subset \R^2 \Big| \tan(\theta/2)\geq \frac{|x_{2}|}{x_{1}}, \; x_{1}\geq 0 \right\} \subset \R^2,
\end{align*}
we have:
\begin{prop}[Carleman Estimate]
\label{prop:Carl}
Let $u\in C_{0}^{\infty}(\Omega_{\theta}\setminus B_{R}(0))$, $R\gg 1$ sufficiently large, $\theta \geq 95^{\circ}$. Then there exists a Carleman weight
 $\phi(t,x)$, $|\phi(t,x)|< C \frac{|x|^2}{t}$ 
such that
\begin{align}
\label{eq:Carlm}
\tau \left\| e^{\tau \phi}\frac{(1-t)^{\frac{1}{2}}}{t} u \right\|_{L^2}+ \tau^{\frac{1}{2}}\left\| e^{\tau \phi} u \right\|_{L^2}+\left\| e^{\tau \phi}\nabla  u \right\|_{L^2} \lesssim \left\| e^{\tau \phi}(\dt + \D)u \right\|_{L^{2}}.
\end{align}
\end{prop}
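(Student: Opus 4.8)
The plan is to prove \eqref{eq:Carlm} by the conjugation-and-commutator method, with the geometry of the sector and the parabolic scaling built into the weight and into a preliminary change of variables. Writing $v = e^{\tau\phi}u$ and conjugating the backward heat operator $\dt+\D$ gives $L_{\phi}v := e^{\tau\phi}(\dt+\D)e^{-\tau\phi}v = Sv + Av$, where I collect the formally self-adjoint part $S = \D + \tau^{2}|\nabla\phi|^{2} - \tau\phi_{t}$ and the formally skew-adjoint part $A = \dt - 2\tau\nabla\phi\cdot\nabla - \tau\D\phi$. Expanding the square then yields
\begin{equation*}
\|L_{\phi}v\|_{L^{2}}^{2} = \|Sv\|_{L^{2}}^{2} + \|Av\|_{L^{2}}^{2} + \langle [S,A]v, v\rangle_{L^{2}} + (\text{boundary terms}),
\end{equation*}
so the entire estimate reduces to extracting the three quantities on the left of \eqref{eq:Carlm} (rewritten in $v$) from the commutator form $\langle[S,A]v,v\rangle$, up to errors absorbable into $\|Sv\|^{2}$ and $\|Av\|^{2}$, while controlling the boundary integrals on $\partial\Om_{\theta}$ and near $t\in\{0,1\}$.

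Before computing the commutator I would adapt the coordinates to the cone. In two dimensions the sector $\Om_{\theta}$ is conformally a half-space via $z\mapsto z^{\pi/\theta}$ (equivalently, in polar coordinates $(r,\varphi)$ one rescales the angle so that $\varphi\in(-\theta/2,\theta/2)$ maps to a fixed interval): this turns $\D$ into a constant multiple of a flat Laplacian times an explicit conformal factor and straightens the two boundary rays. Simultaneously I would absorb the parabolic scaling by passing to the self-similar variables $y = x/\sqrt{t}$, $s = \log t$, which is the natural choice given the admissibility bound $|\phi|<C|x|^{2}/t$: the time-dependence of $\phi$ then becomes a drift of Ornstein--Uhlenbeck type, and the weight takes the form $\phi = -\,t^{-1}\Phi(y,\varphi)$ with $\Phi$ essentially the elliptic Carleman weight for the sector, deformed by a correction tied to the new time variable.

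The heart of the matter, and the point where the angle threshold enters, is the strict positivity of $\langle[S,A]v,v\rangle$. I would treat it microlocally: a partial Fourier transform in the (similarity) time variable, with dual variable $\sigma$, decouples the problem into a family of elliptic commutator forms indexed by the time frequency $\sigma$, in the spirit of the remark that every such estimate is a deformation of an elliptic weight at time frequency zero. Computing the principal symbol of $[S,A]$ and restricting it to the characteristic variety $\{|\xi|^{2}=\tau^{2}|\nabla\phi|^{2}+\tau\phi_{t},\ \sigma = 2\tau\nabla\phi\cdot\xi\}$, the task becomes to verify that a quadratic form in $(\xi,\sigma)$ assembled from $D^{2}\phi$, $\nabla\phi$ and $\phi_{t}$ is positive there. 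At $\sigma=0$ this is exactly the Hörmander pseudoconvexity condition for $\D$ on the sector, which the conformal picture shows holds for all $\theta\geq 90^{\circ}$; the new ingredient is the sharp two-dimensional sufficient condition for pseudoconvexity applied to the $\sigma\neq 0$ deformation, and this is what raises the threshold from $90^{\circ}$ to slightly under $95^{\circ}$. I expect the extremal configuration to be governed by the lowest angular (Dirichlet) mode on the cross-section $(-\theta/2,\theta/2)$, reducing the verification to a one-parameter inequality in $\theta$ whose failure occurs near $95^{\circ}$.

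The main obstacle is precisely this symbol positivity for $\sigma\neq 0$: the time-frequency terms spoil the clean ellipticity, and one must show the commutator stays positive \emph{uniformly} over all $(\xi,\sigma)$ on the characteristic set, not merely at $\sigma=0$. A secondary but genuine difficulty is the bookkeeping of lower-order and boundary contributions: the commutator should produce the weighted gradient term $\|e^{\tau\phi}\nabla u\|$ together with the singular-in-time term $\tau\|e^{\tau\phi}(1-t)^{1/2}t^{-1}u\|$ (the factor $(1-t)^{1/2}t^{-1}$ tracking the behaviour of $\phi_{t}$ and its vanishing at the data time $t=1$), but these have to be separated from error terms of the same formal order, which is why the positivity must be quantitative rather than merely strict. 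Once $\langle[S,A]v,v\rangle$ is bounded below by a fixed multiple of the sum of the three squared left-hand quantities minus absorbable errors, and the boundary integrals on $\partial\Om_{\theta}$ are seen either to vanish by the support of $u$ or to carry the correct sign via $\p_{\nu}\phi$, integrating over $\Om_{\theta}\times(0,1)$ and undoing the changes of variables yields \eqref{eq:Carlm}.
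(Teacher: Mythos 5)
Your overall skeleton (conjugation, splitting into symmetric and antisymmetric parts, extracting positivity from the commutator) matches the paper's, but the proposal has two genuine gaps, and its explanation of where the $95^{\circ}$ threshold comes from does not match the actual obstruction. First, the proposition is an \emph{existence} statement for the weight, and you never construct one: "essentially the elliptic Carleman weight for the sector, deformed by a correction tied to the new time variable" is not a weight function. The whole difficulty is that the admissible weights are pinned between three competing requirements -- subquadratic radial growth $r^{\alpha}$ with $\alpha\leq 2$ (forced by the Gaussian decay of Lemma \ref{lem:Sverak}), smallness/vanishing on $\partial\Om_{\theta}$ (hence concavity in $\va$), and the spatial pseudoconvexity inequality (\ref{eq:pseudo}). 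The paper resolves this by exhibiting the explicit two-parameter weight $\phi_{\alpha,\beta}=r^{\alpha}(\cos^{\beta}\va-\cos^{\beta}(\theta/2))$ with $\alpha=1.999999$, $\beta=2.474917$, multiplied by $\frac{1-t}{t}$ and augmented by $\epsilon(1-t)^{2}$, and verifying (\ref{eq:pseudo}) for it. Relatedly, your claim that the threshold arises from the time-frequency ($\sigma\neq 0$) deformation of a $90^{\circ}$ elliptic condition is not how the obstruction actually appears: because the weight carries a factor $\tau$ in both its spatial and temporal parts, the spatial commutator scales like $\tau^{3}$ against $\tau^{2}$ for the temporal one, so there is a regime where the \emph{spatial} weight must be pseudoconvex on its own; the $95^{\circ}$ limit is the failure of the purely spatial ODE inequality (\ref{eq:pseudo}) under the constraints above, not a $\sigma$-dependent deformation.

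Second, even granting a pseudoconvex weight, your microlocal plan does not explain how symbol positivity on the characteristic set becomes the actual $L^{2}$ inequality with the gradient term. The Hessian $\nabla^{2}\phi$ is indefinite (concave in $\va$, convex in $r$), so the commutator term $4\tau\int\nabla u\cdot\nabla^{2}\phi\,\nabla u$ has no sign, and passing from positivity of the Poisson bracket on $\{p^{r}=0\}\cap\{p^{i}=0\}$ to a lower bound requires either a G\aa rding-type argument (delicate here: anisotropic scaling, unbounded domain, weight singular at $t=0$) or a direct device. The paper uses the latter: it introduces $F=-4\tau\lambda_{\min}(\nabla^{2}\phi_{1})+\frac{2}{5}$ and combines $\|S_{\phi}u\|^{2}$ with the commutator through $\int(S_{\phi}u,Fu)$, which simultaneously produces $\frac{2}{5}\int|\nabla u|^{2}$ and converts the indefinite $\tau^{3}$ block into exactly the pseudoconvexity expression $4\tau^{3}\int(\nabla\phi\cdot\nabla^{2}\phi\nabla\phi+\lambda_{\min}(\nabla^{2}\phi)|\nabla\phi|^{2})u^{2}\geq 0$. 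Without this step (or a worked-out substitute), and without the subsequent bookkeeping of the $\tau^{2}$ and $\tau$ terms that yields the singular prefactor $\frac{(1-t)^{1/2}}{t}$, the proposal does not close. The conformal flattening $z\mapsto z^{\pi/\theta}$ is also a red herring for the parabolic problem, since $\dt$ does not transform conformally and the paper works directly in the sector; and since $u$ has compact support in $\Om_{\theta}\setminus B_{R}(0)$, the boundary-sign discussion is unnecessary.
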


\begin{figure}[h]
\centering
\includegraphics{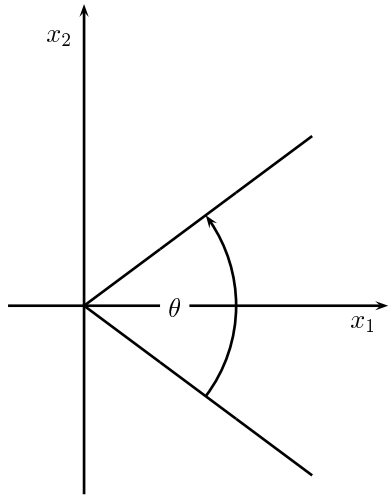}
\caption{The domain $\Omega_{\theta}$.}
\end{figure}

The difficulty in proving this estimate stems from the loss of convexity of the weight function. Due to the restrictions on its radial growth (which is necessary if the inequality is to be applied to the backward uniqueness problem), it cannot be easily convexified in the radial direction which would simplify the proof of the Carleman inequalities significantly. \\
As in \cite{LiSv}, the backward uniqueness property is a direct consequence of the Carleman estimate:

\begin{prop}[Backward Uniqueness of the Heat Equation in Angular Domains]
\label{prop:backwarduniqueness}
Let $\theta \geq 95^{\circ}$ and assume that $u: [0,1]\times \Omega_{\theta}  \rightarrow \R$ satisfies 
\begin{equation}
\begin{split}
\label{eq:heat}
|(\dt + \D)u| &\leq C(|u|+|\nabla u|) \mbox{ in } [0,1]\times \Omega_{\theta},\\
u(0,x) &= 0 \mbox{ in } \Omega_{\theta},\\
|u| & \leq M \mbox{ in } [0,1]\times \Omega_{\theta}.
\end{split}
\end{equation}
Then $u\equiv 0$.
\end{prop}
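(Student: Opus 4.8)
The plan is to deduce the backward uniqueness property directly from the Carleman estimate of Proposition~\ref{prop:Carl}, applying it to spatially truncated versions of $u$ and letting the large parameter $\tau$ tend to infinity. Since the operator $\dt+\D$ and the initial condition $u(0,\cdot)=0$ in \eqref{eq:heat} already match the setting of \eqref{eq:Carlm}, no time reversal is needed. First I would record the regularity required to apply the estimate: from $|u|\leq M$ and $|(\dt+\D)u|\leq C(|u|+|\nabla u|)$, interior parabolic estimates give, after bootstrapping, that $u$ is smooth in the interior with $\nabla u$ locally bounded, so that the truncations below lie in the admissible class $C_0^\infty(\Omega_\theta\setminus B_R(0))$ (up to a routine mollification, and using that the estimate tolerates nonvanishing traces on the lateral sides of the cone, where the pseudoconvexity of the weight supplies the favorable boundary terms).

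Next I would fix $R\gg1$ as in Proposition~\ref{prop:Carl} and choose a radial cutoff $\chi=\chi(|x|)$ that equals $1$ on $B_\rho\setminus B_{2R}$ and vanishes on $B_R$ and for $|x|\geq2\rho$, with $\rho$ to be sent to infinity. Applying \eqref{eq:Carlm} to $v:=\chi u$ and expanding $(\dt+\D)(\chi u)=\chi(\dt+\D)u+2\nabla\chi\cdot\nabla u+(\D\chi)u$, the first term is bounded by $C\chi(|u|+|\nabla u|)$ via \eqref{eq:heat}. Its zeroth order part is absorbed into $\tau^{1/2}\|e^{\tau\phi}u\|_{L^2}$ on the left for $\tau$ large; the first order part $C\|e^{\tau\phi}\chi\nabla u\|_{L^2}$ is absorbed into the gradient term on the left after interpolating it against the much stronger contribution $\tau\|e^{\tau\phi}\frac{(1-t)^{1/2}}{t}u\|_{L^2}$ (integrating by parts and using the equation to trade one derivative for a factor $\tau^{1/2}$). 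This leaves only the commutator $2\nabla\chi\cdot\nabla u+(\D\chi)u$, supported in the inner shell $\{R\leq|x|\leq2R\}$ and the outer shell $\{\rho\leq|x|\leq2\rho\}$.

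The heart of the argument is the control of these two shell contributions together with the temporal endpoints. At $t=0$ the initial condition $u(0,\cdot)=0$ removes the boundary term produced by the integration by parts in time --- this is precisely the role of the $1/t$ singularity built into $\phi$ --- while the degeneracy $(1-t)^{1/2}$ tames the endpoint $t=1$. Bounding $u$ and $\nabla u$ on the inner shell, that contribution is controlled by $e^{2\tau\phi_0}$ with $\phi_0:=\sup_{R\leq|x|\leq2R}\phi$, while the outer contribution, using $|u|\leq M$ together with the growth restriction $|\phi|<C\frac{|x|^2}{t}$ (the Tychonoff-type compatibility between the quadratic weight and the boundedness of $u$), does not survive the limit. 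I would then localize to a far-field region $G=\{\phi>\phi_1\}$ with $\phi_1>\phi_0$; there the left-hand side dominates, yielding $\tau\,e^{2\tau\phi_1}\|u\|_{L^2(G)}^2\lesssim e^{2\tau\phi_0}$, so that letting $\tau\to\infty$ forces $u\equiv0$ on $G$, hence on an open far-field set $(\Omega_\theta\setminus B_{R'})\times(0,1)$.

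Finally, since $u$ vanishes on this open space-time set and $\Omega_\theta$ is connected, I would invoke the interior strong unique continuation property for parabolic differential inequalities --- which is purely local and hence independent of the opening angle --- to propagate the vanishing through the interior and, by continuity, up to the closure, giving $u\equiv0$ on $[0,1]\times\Omega_\theta$; this is consistent with the elliptic argument underlying Proposition~\ref{prop:backwaruniquness-}. The step I expect to be the main obstacle is the shell analysis of the third paragraph: arranging the level sets of $\phi$ so that the inner-shell error sits strictly below the region of interest, while simultaneously beating the quadratic growth $|x|^2/t$ against the mere boundedness of $u$ at infinity, all in a way compatible with the order of the limits $\rho\to\infty$ and $\tau\to\infty$. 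It is exactly the delicate convexity and level-set structure of the weight established in Proposition~\ref{prop:Carl} that makes this balancing possible, and it is there, rather than in the present deduction, that the restriction $\theta\geq95^\circ$ enters.
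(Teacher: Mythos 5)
Your overall architecture (cut off, apply Proposition~\ref{prop:Carl}, send $\tau\to\infty$, finish by unique continuation from an open set) is the right one and matches the paper, but three of the steps you treat as routine are exactly where the actual work lies, and as written they do not close. First and most importantly, you never invoke Lemma~\ref{lem:Sverak}. Mere boundedness $|u|\leq M$ cannot control the error terms: the weight satisfies $\phi\sim\frac{1-t}{t}r^{\alpha}$ with $\alpha$ close to $2$, so $e^{\tau\phi}M$ blows up both on your outer shell $\{\rho\leq|x|\leq2\rho\}$ as $\rho\to\infty$ and near $t=0$ (where $u(0,\cdot)=0$ only gives polynomial vanishing, far too weak against $e^{c\tau/t}$). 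The paper's proof begins by shrinking the angle from $\theta$ to $\theta-\epsilon$ so that $\dist(x,\partial\Omega_{\theta})\gtrsim|x|$ inside $\Omega_{\theta-\epsilon}$, upgrading the ESS estimate to the \emph{uniform} Gaussian bound $|u|\leq Ce^{-c|x|^{2}/t}$; only this, combined with the strict subquadraticity $\alpha<2$, lets one pass to the limit in the outer truncation for fixed $\tau$ and then send $\tau\to\infty$. It then performs a parabolic rescaling and translation $u_{\lambda}(t,x)=u(\lambda^{2}(t-\tfrac12),\lambda x)$ so that the solution vanishes identically for $t<\tfrac12$, which disposes of the temporal endpoint entirely. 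The same rescaling also fixes your third problem: in (\ref{eq:Carlm}) the gradient term appears on the left with no gain in $\tau$, so the term $C\|e^{\tau\phi}\nabla u\|_{L^{2}}$ coming from (\ref{eq:heat}) can only be absorbed if $C$ is small. Your proposed ``interpolation against the $\tau\|e^{\tau\phi}\frac{(1-t)^{1/2}}{t}u\|$ term by trading a derivative for $\tau^{1/2}$'' does not obviously work (integrating by parts against $e^{2\tau\phi}$ reintroduces $\tau|\nabla\phi|$ and you are back where you started); the rescaling, which replaces $C$ by $C\lambda$ with $\lambda\ll1$, is what the paper uses.

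Second, your cutoff is purely radial, and you justify the missing lateral truncation by asserting that the estimate ``tolerates nonvanishing traces on the lateral sides of the cone.'' Proposition~\ref{prop:Carl} is stated for $u\in C_{0}^{\infty}(\Omega_{\theta}\setminus B_{R}(0))$, and the problem (\ref{eq:heat}) imposes no boundary condition whatsoever on $\partial\Omega_{\theta}$, so there is no sign information available for boundary terms; this assertion is unsupported. The paper instead multiplies by a second cutoff $w_{2}(\phi(t,x)-C)$ supported where the weight is not small, exploiting precisely the design requirement that $\phi$ vanishes on the lateral boundary: the resulting commutator errors live where $\phi-C\leq-\tfrac12$, carry a factor $e^{-\tau/2}$, and die in the limit $\tau\to\infty$ together with the inner-shell errors (after the constant $C$ is chosen so that $\{x_{1}\leq 2R\}$ is also contained in $\{\phi-C\leq-\tfrac12\}$ on the support of the rescaled solution). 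You should restructure the truncation along these level sets of $\phi$ rather than along spheres.
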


\begin{rmk}
The angle $\theta \geq 95^{\circ}$ is not optimal. Various numerical experiments suggest that evaluating the one-dimensional pseudoconvexity condition, i.e. expression (\ref{eq:pseudo}), it is possible to reach angles of less than $95^{\circ}$. However, the gain seems to be marginal (one reaches angles of $\sim 94.8^{\circ}$); in fact, it seems not easy to reach angles of less than $94^{\circ}$ (via one-dimensional weight functions).
\end{rmk}

Under the additional assumption that both the initial and final data vanish, it is possible to prove the backward uniqueness property in angles strictly larger than $90^{\circ}$ in two dimensions. This is a nontrivial result depending strongly on the \textit{unboundedness} of the underlying domain. In fact, in any \emph{bounded} domain it would be possible to find a large variety of boundary controls satisfying the initial and final condition. \\
As in the dissertation of Li \cite{Li}, this (conditional) uniqueness statement is a consequence of decay properties of the underlying \textit{elliptic} problem: In domains of opening angles greater than or equal to $90^{\circ}$ there are no harmonic functions decaying with a Gaussian rate. Instead of employing the Phragmen-Lindel\"of theorem for harmonic functions (as Li does), we argue via an elliptic Carleman estimate for which we use a limiting Carleman weight in the sense of Kenig et al. \cite{KSU}. Compared with the Phragmen-Lindel\"of-Ansatz this strategy seems to be more stable and allows to include lower order perturbations (with time independent coefficients).

\begin{prop}[Uniqueness of the Control Function in $L^{2}$]
\label{prop:backwaruniquness-}
Let $\theta>\frac{\pi}{2}$, $\alpha = \frac{\pi}{\theta}$ and assume that  $u: [0,1]\times \Omega_{\theta} \rightarrow \R$ satisfies
\begin{equation}
\begin{split}
\label{eq:heat-}
(\dt + \D)u &= c_{1}(x)u + c_{2}(x)\cdot \nabla u \mbox{ in } [0,1]\times \Omega_{\theta},\\
|c_{2}(x)| & \leq C\frac{1}{|x|^{b(\theta)}} \mbox{ in } \Omega_{\theta},\\
u(1,x) &= 0 \mbox{ in } \Omega_{\theta},\\
u(0,x) &= 0 \mbox{ in } \Omega_{\theta},\\
|u|& \leq M \mbox{ in } [0,1]\times \Omega_ {\theta},
\end{split}
\end{equation}
where $b(\theta)> \frac{2-\alpha}{\alpha}$ and $c_{1}\in L^{\infty}$.
Then $u\equiv0$.
\end{prop}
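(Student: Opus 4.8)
The plan is to exploit the two vanishing conditions $u(0,\cdot)=u(1,\cdot)=0$ together with the \emph{time-independence} of $c_1,c_2$ in order to convert the parabolic backward-uniqueness problem into a one-parameter family of \emph{stationary} unique continuation problems, to which the rigidity of the cone at angles $\theta>\tfrac\pi2$ can be applied. Concretely, I would first extend $u$ by zero to $t\in\R\setminus[0,1]$. Since $u$ vanishes at both temporal endpoints, the extension $\tilde u$ still satisfies $(\dt+\D)\tilde u=c_1\tilde u+c_2\cdot\nabla\tilde u$ on $\R\times\Omega_\theta$ without any Dirac contributions at $t\in\{0,1\}$, and it is compactly supported in $t$. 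Applying the Fourier transform $\F$ in the time variable, $w(\lambda,x)=\int_\R e^{-i\lambda t}\tilde u(t,x)\,dt$, the constancy of the coefficients in $t$ turns the equation, for every fixed $\lambda\in\R$, into
\begin{equation*}
\D w(\lambda,\cdot)=(c_1-i\lambda)\,w(\lambda,\cdot)+c_2\cdot\nabla w(\lambda,\cdot)\quad\text{in }\Omega_\theta,
\end{equation*}
an elliptic equation with the bounded potential $c_1-i\lambda\in L^\infty$ and the unchanged drift $c_2$. If I can show $w(\lambda,\cdot)\equiv0$ for every $\lambda$, Fourier inversion yields $\tilde u\equiv0$ and hence $u\equiv0$.

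The decisive analytic input is spatial \emph{decay} of the modes $w(\lambda,\cdot)$: bounded solutions of the elliptic equation above need not vanish, since the lateral boundary data of the cone are free, so rigidity can only be expected for solutions decaying faster than the critical homogeneity of $\Omega_\theta$. This is exactly where the vanishing data and the bound $|u|\le M$ enter. I would invoke the first, domain-insensitive step of the Escauriaza--Seregin--\v{S}ver\'ak backward-uniqueness scheme \cite{ESS}: a local-in-time parabolic Carleman estimate with Gaussian weight, applied near the time slice at which $u$ vanishes, shows that the bounded solution has \emph{Gaussian} spatial decay $|u(t,x)|\lesssim e^{-c|x|^2}$ throughout $(0,1)$ (with a $t$-dependent constant). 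Because $|w(\lambda,x)|\le\int_0^1|u(t,x)|\,dt$ is independent of $\lambda$, each mode inherits the same bound $|w(\lambda,x)|\lesssim e^{-c|x|^2}$, uniformly in $\lambda$. For $\theta>\tfrac\pi2$, i.e. $\al=\pi/\theta<2$, this rate is strictly faster than the slowest admissible decaying homogeneity $|x|^{-\al}$ of solutions in the cone.

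To upgrade this into $w(\lambda,\cdot)\equiv0$, I would prove an elliptic Carleman estimate built on a \emph{limiting Carleman weight} in the sense of Kenig--Sj\"ostrand--Uhlmann \cite{KSU}. The natural device is the conformal change of variables $s=\log|x|$, which maps $\Omega_\theta$ to the half-strip $\{s>\log R,\ |\psi|<\theta/2\}$ and turns $\D$ into $e^{-2s}(\p_s^2+\p_\psi^2)$; a weight affine in $s$ is then a limiting weight, and a quantitative Phragm\'en--Lindel\"of/Carleman argument in these coordinates realizes the rigidity exactly in the range $\al<2$, i.e. $\theta>\tfrac\pi2$, mirroring the absence of Gaussian-decaying harmonic functions in such cones (Li \cite{Li}). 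The bounded potential $c_1-i\lambda$ is absorbed by the zeroth-order term of the estimate, while the drift requires the sharp balance $b(\theta)>(2-\al)/\al$: in the conformal coordinates the first-order perturbation $c_2\cdot\nabla$ acquires a factor that decays precisely under this condition and can then be absorbed into the principal gradient term. Feeding the Gaussian decay of $w(\lambda,\cdot)$ into this estimate through cutoffs whose errors at infinity vanish thanks to the decay forces $w(\lambda,\cdot)\equiv0$ for each $\lambda$, and the claim follows by Fourier inversion.

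The main obstacle is the elliptic Carleman estimate itself. Because a limiting Carleman weight is \emph{degenerate}—it lies exactly on the boundary of the pseudoconvexity region and supplies no surplus convexity—the estimate comes with the minimal possible gain in $\tau$, leaving almost no room to absorb the first-order term $c_2\cdot\nabla$; this is precisely what forces the sharp hypothesis $b(\theta)>(2-\al)/\al$ and what makes the borderline angle $\theta=\tfrac\pi2$ genuinely critical. A secondary difficulty is to make the decay step fully rigorous on the \emph{entire} time interval, so that the averaged modes $w(\lambda,\cdot)$ really inherit a uniform Gaussian bound, and to control the free lateral boundary of $\Omega_\theta$, where the cutoff and boundary terms arising in the Carleman estimate must be shown to be dominated by the Gaussian decay rather than by prescribed boundary data.
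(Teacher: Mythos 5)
Your proposal follows essentially the same route as the paper: reduce to a family of elliptic equations via a transform in time (you take the Fourier transform of the zero extension, the paper a one-sided Laplace transform --- equivalent here, since both rest on $u(0,\cdot)=u(1,\cdot)=0$ and the $t$-independence of $c_1,c_2$), feed in the Escauriaza--Seregin--\v{S}ver\'ak Gaussian decay for each mode, and conclude with an elliptic Carleman estimate built on the limiting weight $\Re((x+iy)^{\alpha})$, absorbing $c_2\cdot\nabla$ under the stated condition on $b(\theta)$ and finishing by unique continuation. The only cosmetic difference is that you would prove the elliptic Carleman estimate in logarithmic/conformal coordinates, whereas the paper rescales a local estimate over dyadic annuli; both are standard realizations of the same limiting-Carleman-weight idea.
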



\begin{rmk}
Proposition \ref{prop:backwaruniquness-} demonstrates the uniqueness of the possible control function for the heat equation in an unbounded, conical domain of opening angle $\theta> \frac{\pi}{2}$. This is in sharp contrast with the results for the heat equation in bounded domains, in which case there are infinitely many possibilities for such controls \cite{LRob}.
\end{rmk}

\begin{rmk}
Very recently, J. Wu and W. Wang \cite{WW} independently proved similar results reaching an angle of $99^{\circ}$. Our methods additionally rely on a detailed pseudoconvexity analysis to reach $95^{\circ}$. This allows us to identify a necessary and (almost) sufficient condition for a certain class of weight functions, c.f. Section \ref{sec:proII}.
\end{rmk}

\begin{rmk}
Matsaev and Gurarii \cite{GM} claim that the backward uniqueness result for the pure heat equation can be reduced to an existence result for the Laplacian even if no additional assumption on the behavior of $u(1,x)$ is made. However, there seems to be no proof of this statement in the literature.
\end{rmk}

Let us comment on the organization of the remainder of the article: In the following two subsections we recall characteristic examples from the literature. Then we carry out the decisive parabolic pseudoconvexity analysis, deduce an ordinary differential inequality and evaluate this numerically, c.f. Section \ref{sec:proII}. In Sections \ref{sec:Carl} and \ref{sec:bur} this is then used to prove the desired Carleman estimate from Proposition \ref{prop:Carl} and to deduce the backward uniqueness result in two-dimensional conical domains with opening angles larger than approximately $95^{\circ}$. Finally, in the last section, we illustrate how the uniqueness of controls can be reduced to an elliptic Carleman estimate which we prove by using appropriate limiting Carleman weights.

\subsection{Null-Controllability and Escauriaza's Example}
\label{sec:examples}

In the literature there is good reason indicating that the behaviour of solutions of the heat equation in unbounded domains has to differ strongly from that in bounded domains.
In spite of the infinite speed of progagation, it is not reasonable to expect an observability inequality for the heat equation in unbounded domains. As Micu and Zuazua point out, a simple translation argument proves that this cannot be possible without an additional weight \cite{MZ1}. Indeed, the null-controllability property is equivalent to an observability inequality for the adjoint system:
\begin{align*}
\left\| \varphi(0) \right\|_{L^{2}(\Omega)}^2 \leq 
C \int\limits_{0}^{T}\int\limits_{\partial \Omega}\left| \frac{\partial \varphi}{\partial \nu}\right|^2 d\mathcal{H}^{n-1}(x) dt,
\end{align*}
where $\varphi$ satisfies is a solution of the adjoint heat equation
\begin{align*}
(\dt - \D)\va &= 0 \mbox{ in } (0,T) \times \Omega,\\
 \va & = 0 \mbox{ on } (0,T) \times \partial \Omega,\\
 \va & = \va_{T} \mbox{ in } \{T\} \times \Omega.
\end{align*}
Considering $\varphi_{T}\in C_{0}^{\infty}(\Omega)$, $\varphi_{T}\geq 0$, we define translations $\varphi_{k,T}(x):= \varphi_{T}(x-k)$.
Inserting these translated data and the corresponding solution of the adjoint problem into the observability inequality, one notices that
\begin{align*}
\frac{\left\| \va(0) \right\|_{L^2(\Omega)}^2}{\int\limits_{0}^{T}\int\limits_{\partial \Omega}\left| \frac{\partial \varphi}{\partial \nu}\right|^2 d\mathcal{H}^{n-1}(x) dt} \rightarrow \infty \mbox{ as } k\rightarrow \infty.
\end{align*}
This heuristic argument suggests that the heat equation behaves differently in unbounded domains. Yet, it neither excludes controllability in weighted spaces nor the existence of specific null-controllable initial data. At this point the example of Escauriaza shows that at least in the ``small angle regime" it is possible to find bounded initial and boundary data which are null-controllable. \\

We briefly recall Escauriaza's example, c.f. \cite{LiSv}: Considering the rest of the variables as dummy variables, it suffices to give an example in two dimensions only. Here, it is possible to make use of the Appell transform to switch from a solution $u(x,t)$ of the forward heat equation to a solution $v(y,s)$ of the backward heat equation.\\
The (two-dimensional version of the) transform reads
\begin{align*}
u(x,t)& = \frac{1}{4\pi t} e^{- \frac{|x|^{2}}{4t}}v\left(\frac{x}{t},\frac{1}{t}\right),\\
y&= \frac{x}{t}, s= \frac{1}{t}.
\end{align*}
Starting with a harmonic function, it becomes possible to associate a backward caloric function to it. Considering the harmonic function
\begin{align*}
h(x) = \Re e^{-(x_{1} + ix_{2})^{\alpha}}, \ \alpha >2,
\end{align*}
its Appell transform
\begin{align*}
v(x,t) = e^{\frac{|x|^{2}}{4t}}h \left(\frac{x}{t} \right)
\end{align*}
yields a solution of the backward heat equation:
\begin{align*}
\dt v + \D v = 0,\\
v(x,0) = 0.
\end{align*}
Away from the origin this function is uniformly bounded in any conical domain with opening angle $\theta\in [0,\frac{\pi}{\alpha})$. Translating in space and reflecting in time, produces a counterexample to the backward uniqueness property of the heat equation:
\begin{align*}
u(x,t) = v(x_{1}+1,x_{2}+1,1-t).
\end{align*}

\section[The Parabolic Weight Function]{The Choice of the Weight Function and Parabolic Pseudoconvexity}
\label{sec:proII}
\subsection{The Pseudoconvexity Condition}
As we are interested in proving an anisotropic Carleman inequality, we treat the temporal and spatial variables according to the parabolic scaling in the usual conjugation procedure, c.f. \cite{Tat3}, \cite{Tat4}. Using an arbitrary weight, $\phi$, and setting $u=e^{-\phi}w$, this leads to the following expression
\begin{align*}
\left\|e^{ \phi} (\D+\dt)u\right\|_{L^{2}}^{2} = \left\| (\D+ |\nabla \phi|^{2} -2  \nabla \phi\cdot \nabla  -  \D \phi  + \dt - \ \dt \phi)w  \right\|_{L^{2}}^{2}.
\end{align*}
Separation into the symmetric and antisymmetric parts yields
\begin{align*}
& \left\| (\D+ |\nabla \phi|^{2} -2 \nabla \phi\cdot \nabla  -  \D \phi  + \dt -  \dt \phi)w  \right\|_{L^{2}}^{2}\\ 
& = \left\|  (\D + |\nabla \phi|^2 -\dt\phi)w \right\|_{L^{2}}^{2} + \left\|  (\dt - 2  \nabla \phi \cdot \nabla -  \D \phi )w  \right\|_{L^{2}}^2\\
& \;\;\;\;+ \int( [\D + |\nabla \phi|^2 -\dt\phi ,\dt - 2 \nabla \phi \cdot \nabla - \D \phi ]w,w)dx.
\end{align*}
Taking the anisotropy of the equation into account (and assuming $\phi\sim \tau$), the principal symbols of these expressions read
\begin{align*}
 p^r& = -|\xi|^2+|\nabla \phi|^2,\\
 p^i &= s-2 \nabla \phi \cdot \xi,
\end{align*} 
(in a bounded domain).
As for all Carleman inequalities, it suffices to derive the estimate on the characteristic set of the principal symbol. Here, the positivity has to originate from the commutator expression. On the characteristic set the leading order terms of the spatial commutator turn into
\begin{align*}
 \{p^r,p^i\}_{x}=  4\nabla \phi \cdot \nabla^2 \phi \nabla \phi + 4 |\nabla \phi|^2 \frac{ \xi}{|\xi|} \cdot \nabla^2 \phi \frac{ \xi}{|\xi|},
\end{align*}
while the temporal commutator is of the following form
\begin{align*}
\{p^r, p^i\}_{t}=  -2\dt |\nabla \phi|^2. 
\end{align*}
As we will see in the sequel, both terms play an essential role for our analysis: 
\begin{itemize}
\item \textbf{Decay of Null-Controllable Solutions.} An equilibrium condition for the temporal and spatial commutators allows to deduce Gaussian decay for null-controllable solutions of the heat equation. This was proved by \v{S}ver\'ak et al. \cite{ESS} and can also be extended (with appropriately adapted exponents) to higher order diffusion equations. The key idea here is to employ non-convex weights in the $x$-variable which are \textit{not} weighted by the (large) prefactor $\tau$, combined with convex weights in the temporal variable which are weighted by a factor of $\tau$. Although this implies that the \textit{spatial} commutator does not induce positivity on the intersection of the characteristic sets of the symmetric and antisymmetric parts of the operator, positivity can be obtained from the \textit{temporal} part of the commutator. The uttermost, still controllable amount of non-convexity in the spatial part is determined by an equality of the scaling of the most negative commutator contributions, $\nabla \phi \cdot \nabla^2 \phi \nabla \phi$, and the strongest positive commutator contributions, $-\dt |\nabla \phi|^2$. Thanks to the strong $\tau$ weight in time, the temporal commutator provides enough positivity in this case, c.f. Lemma \ref{lem:Sverak}.
\item \textbf{Backward Uniqueness Property.} The spatial terms dictate the necessary conditions for Carleman weights which can be used in proving the backward uniqueness property. In order to treat arbitrary boundary terms, we have to truncate the weight function on the respective spatial and temporal boundaries. This, however, implies that the weight must be very small at the boundary, while it has to become very large in the (spatial and temporal) interior of the domain. This is achieved via weights with a factor $\tau$ both in their spatial and their temporal components. From this we infer the existence of a spatial regime in which the spatial commutator dominates over the temporal one due to its scaling with $\tau^3$ (the temporal part only scales with $\tau^2$). Therefore, it becomes necessary to study the spatial weight in detail.
\end{itemize}
We proceed with the analysis of the second observation. For that purpose, we consider weights of the form $\tau \phi$ instead of $\phi$. Therefore, a necessary and sufficient condition for the positivity of the commutator on the characteristic set is given by
\begin{align}
\label{eq:pseudo1}
 \{p^r,p^i\}\geq 4\tau^3 \nabla \phi \cdot \nabla^2 \phi \nabla \phi + 4\tau^3 |\nabla \phi|^2 \lambda_{\min}(\nabla^2 \phi) \geq 0,
\end{align}
where $ \lambda_{\min}(\nabla^2 \phi)$ is the smallest eigenvalue of the Hessian $\nabla^2 \phi$. As a consequence, the weight function has to be chosen such that this property is satisfied.
For convex functions $\phi$ this is always true. However, in order to prove the Carleman estimate, the weight has to be ``small" at the boundary of the domain and ``large" in the interior. 
In fact, our Carleman weight has to satisfy the following conditions:
\begin{itemize}
\item[1.)] The weight function has to vanish on the boundary of the domain (both spatially and temporally on the time slice on which the function itself is not already vanishing), and has to be strictly positive in the (spatial and temporal) interior of the domain. This can be slightly relaxed by asking for weight functions which are ``small" (instead of vanishing) on the boundaries of the domain.
As a consequence, the weight function has to be concave in the angular variable $\varphi$ (at least partially). As the pseudoconvexity condition is strictly weaker than the standard convexity notion, it is still possible to find a non-empty class of weights in domains with sufficiently large opening angles.
\item[2.)] As observed by Escauriaza, Seregin and \v{S}ver\'ak \cite{ESS} null-controllable solutions of the heat equation have Gaussian decay at infinity:

\begin{lem}[Gaussian Decay, \cite{ESS}]
\label{lem:Sverak}
Let $u:[0,T]\times B_{R}(0) \rightarrow \R$ satisfy
\begin{align*}
|\dt u + \D u| &\leq c_{1}(|\nabla u| + |u|) \mbox{ in } (0,T)\times B_{R}(0),\\
u(0,x)& = 0 \mbox{ in } B_{R}(0),\\
|u| & < M \mbox{ in } (0,T)\times B_{R}(0),
\end{align*}
for some constant $c_{1}<\infty$. Then there exist constants $\beta, \gamma$, such that for $t\in (0,\gamma)$
\begin{align*}
|u(t,0)| \leq \frac{c_{2}}{\min\{1,T \}}M e^{-\beta \frac{R^{2}}{t}},
\end{align*}
where $c_{2}=c_{2}(c_{1})$, $\gamma = \gamma(c_{1},T)$.  
\end{lem}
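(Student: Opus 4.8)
The statement is the quantitative Gaussian decay estimate of Escauriaza, Seregin and \v{S}ver\'ak, and I would reprove it along the lines of \cite{ESS} through a Carleman inequality adapted to the first of the two regimes discussed above: one with a spatially non-convex weight whose deficit is compensated by temporal convexity. First I would isolate the leading operator by writing $f:= (\dt+\D)u$, so that the hypothesis reads $|f|\leq c_1(|u|+|\nabla u|)$ and $f$ may be carried as an inhomogeneity on the right-hand side of a Carleman estimate and reabsorbed later. The weight $\phi$ is chosen of Gaussian type, a function of $|x|^2/t$, so that $e^{\tau\phi}$ concentrates at the spatial origin and decays rapidly in $|x|$. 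The decisive feature is that such a $\phi$ is \emph{concave} in the radial variable, so that the spatial commutator $\{p^r,p^i\}_x$ fails to be positive on the characteristic set and positivity must instead be drawn from the temporal commutator $\{p^r,p^i\}_t=-2\dt|\nabla\phi|^2$.

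The heart of the matter is therefore the verification that, for this weight, the temporal contribution dominates. Concretely I would impose the equilibrium condition described above, matching the scaling of the most negative spatial term $4\nabla\phi\cdot\nabla^2\phi\nabla\phi$ with that of $-2\dt|\nabla\phi|^2$; for a profile of the form $\phi\sim|x|^2/t$ both scale like $|x|^2/t^3$, so the radial profile can be calibrated to force $\{p^r,p^i\}_x+\{p^r,p^i\}_t\geq0$ on the intersection of the characteristic sets of the symmetric and antisymmetric parts. As in \cite{ESS}, the spatial part of the weight is left unamplified by the large parameter while the temporal part carries the factor $\tau$, so that a single large $\tau$ both tips the commutator to positivity and lets the lower order terms $c_1(|u|+|\nabla u|)$ produced by $f$ be absorbed into the left-hand side. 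The absorption succeeds only on a short time interval whose length is controlled by $c_1$; this is the source of the restriction $t<\gamma$ with $\gamma=\gamma(c_1,T)$.

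With the Carleman estimate established I would localise with a cutoff $\chi(x)\eta(t)$, where $\chi$ is supported in $B_R(0)$ and $\eta$ vanishes near the temporal endpoints. Commuting the cutoff through the operator generates error terms supported on $\{\nabla\chi\neq0\}\subset\{|x|\sim R\}$ and near $t=0$ and $t=T$. The initial condition $u(0,\cdot)=0$, together with the rapid vanishing of $e^{\tau\phi}$ as $t\to0$, annihilates the bottom contribution, while the global bound $|u|\leq M$ controls the spatial error by $M^2$ times the weight at $|x|\sim R$. Since $\phi$ is of order $-R^2/t$ there, this is exactly the mechanism that produces the Gaussian factor: the ratio of $e^{2\tau\phi}$ between the origin and the sphere $|x|=R$ is of the form $e^{-\beta R^2/t}$. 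A final appeal to interior parabolic regularity, namely a local $L^\infty$-$L^2$ bound for the heat inequality on a unit-size cylinder about the origin (rescaled appropriately), turns the weighted $L^2$ lower bound near $x=0$ into the pointwise value $|u(t,0)|$ and yields the constants $c_2=c_2(c_1)$ and the factor $1/\min\{1,T\}$.

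The principal obstacle is the pseudoconvexity bookkeeping of the first two steps: making the inequality $\{p^r,p^i\}_x+\{p^r,p^i\}_t\geq0$ quantitative and uniform, so that a radially concave Gaussian weight is genuinely admissible and so that the subsequent absorption of the lower order terms does not destroy the gained positivity. A secondary, more technical difficulty is tracking the cutoff error terms carefully enough that the surviving exponential rate is precisely $e^{-\beta R^2/t}$ and not something weaker; in particular the spatial cutoff must be placed in a region where the weight is already exponentially small compared to its value at the origin.
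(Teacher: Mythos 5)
The paper does not actually prove this lemma: it is imported verbatim from \cite{ESS}, and the only in-paper content is the heuristic bullet point preceding the statement, which describes exactly the mechanism you outline (a spatially non-convex weight, unamplified by $\tau$, paired with a $\tau$-amplified convex temporal weight, so that the temporal commutator $-2\partial_t|\nabla\phi|^2$ supplies the positivity lost in $4\nabla\phi\cdot\nabla^2\phi\nabla\phi + 4|\nabla\phi|^2\lambda_{\min}(\nabla^2\phi)$). Your sketch is therefore a correct reconstruction of the Escauriaza--Seregin--\v{S}ver\'ak argument and consistent with the paper's own discussion; the one point you leave as an assertion is the decisive quantitative calibration --- that a weight of the form $-|x|^2/(ct)$ with $c$ large enough (the cited reference takes $c=8$) does make the full commutator nonnegative on the characteristic set --- which is the computation carried out in \cite{ESS} rather than here.
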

Microlocally, the estimate of Escauriaza, \v{S}ver\'ak and Seregin uses an equilibrium between a relatively weak, non-convex spatial weight and a very strong, convex temporal weight.\\

For the desired Carleman estimates, it in particular implies that the growth of admissible weights is restricted: Any Carleman weight, which is constructed with the aim of showing the backward uniqueness property, has to have a strictly subquadratic growth behaviour in unbounded conical domains.
\end{itemize}

\subsection[The Weight Function]{The Ansatz for the Weight Function: Necessary and Sufficient Conditions}
In analogy to the weight function of \v{S}ver\'ak and Li \cite{LiSv}, we make the ansatz
\begin{align*}
\phi(r,\varphi):= r^\alpha f(\varphi),
\end{align*}
for a two-dimensional (spatial) weight function in polar coordinates. In this case the pseudoconvexity condition, (\ref{eq:pseudo1}), can be rephrased as a homogeneous cubic ordinary differential inequality:
\begin{equation}
\begin{split}
\label{eq:pseudo}
&(\alpha-1) \alpha^3 f(\varphi)^3+\alpha (2 \alpha-1)
   f(\varphi) f'(\varphi)^2+f'(\varphi)^2 f''(\varphi)\\
&+\frac{1}{2} \left(\alpha^2 f(\varphi)^2+f'(\varphi)^2\right)\left(\alpha^2 f(\varphi)+f''(\varphi) \right.\\
&\left. -\sqrt{(\alpha-2)^2 \alpha^2 f(\varphi)^2-2 (\alpha-2) \alpha f(\varphi) f''(\varphi)+4 (\alpha-1)^2 f'(\varphi)^2+f''(\varphi)^2}\right) \geq 0.
\end{split}
\end{equation}
However, the parameter $\alpha$ cannot be chosen arbitrarily.
Lemma \ref{lem:Sverak} implies a restriction on the possible radial dependence of the weight function: $\alpha \leq 2$. Difficulties in choosing appropriate weights therefore stem from the fact that we cannot convexify the weight in the radial variable in an arbitrarily strong manner.

\subsection{\v{S}ver\'ak's Weight Function and a Modification}
\label{sec:SW}
In order to analyze possible Carleman weights, we briefly review \v{S}ver\'ak's ansatz: The weight function
\begin{align}
\label{eq:Sverak}
\phi_{Sv}(r,\varphi)=r^\alpha\left (\cos^\alpha(\varphi) - \cos^{\alpha}\left(\frac{\theta}{2}\right)\right)
\end{align}
satisfies the pseudoconvexity condition as long as the opening angle $\theta$ remains large enough: $\theta\geq \arccos(\frac{1}{\sqrt{3}}) $. The necessity of this condition can be verified by analytically checking the pseudoconvexity condition at the boundary of the domain. Indeed, \v{S}ver\'ak's weight function degenerates at the boundary although it displays robust pseudoconvexity properties in the interior  (c.f. Figure \ref{fig:Sverak}).
\begin{figure}[ht]
\centering
\includegraphics[scale=0.95]{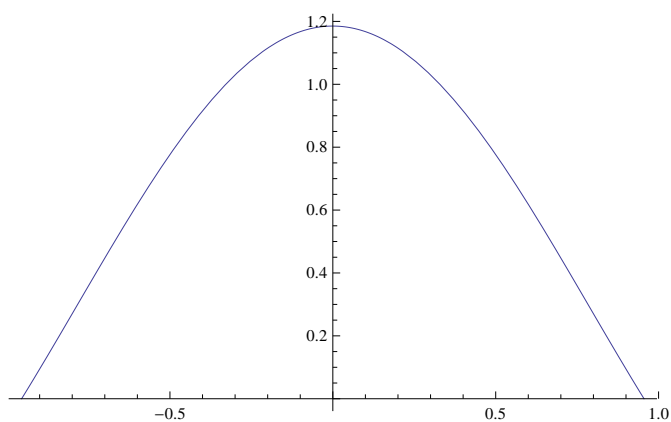}
\caption{The pseudoconvexity condition is satisfied for \v{S}ver\'ak's weight function: The $x-$axis depicts the angle in radians while we plot the values of the pseudoconvexity-expression (\ref{eq:pseudo}) on the $y$-axis. We note that the pseudoconvexity properties of the weight function degenerate at the boundary.}
\label{fig:Sverak}
\end{figure}
A limitation of \v{S}ver\'ak's weight certainly consists in choosing only a one parameter family of weights. If instead the same weight is considered with a second parameter $\beta$, e.g.
\begin{align*}
\phi_{\alpha,\beta}(r,\varphi) = r^\alpha \left (\cos^\beta(\varphi) - \cos^{\beta}\left(\frac{\theta}{2}\right)\right)
\end{align*}
the angle can be reduced significantly.\\
This ansatz has the advantage that although there are restrictions on the growth of $\alpha$ there are none on the size of $\beta$, in particular $\beta\geq 2$ is an admissible exponent. Here the weight suffices to prove the backward uniqueness property in opening angles of up to approximately $95.4^{\circ}$. The drawback of this ansatz, however, is that the pseudoconvexity condition can become fragile in the interior of the domain as well (c.f. Fig. \ref{fig:weight}).\\
This two-parameter family of weight functions is certainly not optimal. A more general ansatz for a weight function could consist of making a power series ansatz and optimizing the coefficients so as to preserve pseudoconvexity in the domain. With the weight
\begin{align*}
\phi(r,\varphi) :=r^{1.99999}( 0.987609 - 1.22053 \varphi^2 + 0.562108 \varphi^4 - 0.162117 \varphi^6 \\
+  0.0481833 \varphi^8 - 0.000001 \varphi^{10}),
\end{align*}
for example, it is possible to reach angles below $95^{\circ}$.

\begin{figure}[ht]
\centering
\includegraphics[scale=0.95]{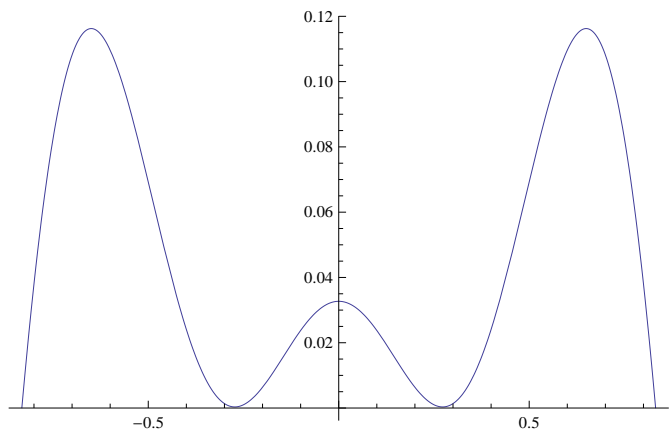}
\caption{For the angle $\theta \sim 95.4^{\circ} $ the pseudoconvexity condition is satisfied for $\phi_{\alpha,\beta}$ with $\alpha=1.999999, \beta = 2.474917$. For this weight function the pseudoconvexity condition  deteriorates at the boundary as well as in the interior.}
\label{fig:weight}
\end{figure}

\subsection{The Numerical Evaluation of the Pseudoconvexity Condition}
\label{sec:numerics}
Instead of trying to guess a suitable weight function, 
it is possible to numerically analyze the pseudoconvexity condition. As the ODE which encodes the one-dimensional pseudoconvexity condition is invariant under the reflection symmetry
\begin{align*}
f(\varphi) \mapsto f(-\varphi),
\end{align*}
we expect the solution to be symmetric if the boundary conditions are prescribed in a symmetric way. Unfortunately, the system seems to be numerically stiff; using Mathematica calculations it seems impossible to reach an angle smaller than approximately $94.8^{\circ}$  in the case of an equality in (\ref{eq:pseudo}). 
Therefore, it would be very interesting to understand the symmetric boundary value problem for (\ref{eq:pseudo}) from an analytical point of view. Due to the nonlinearity and square root in the equation this seems to be challenging.\\
Apart from these (technical) difficulties, we believe that the fundamental problem of determining admissible weights via the described one-dimensional approach is limited to approximately $95^{\circ}$. In other words, the major drawback in reaching angles closer to the conjectured $90^{\circ}$ is caused by restricting to essentially one-dimensional weight functions.

\section[Proof of the Parabolic Carleman Estimate]{Proof of the Carleman Inequality of Proposition \ref{prop:Carl}}
\label{sec:Carl}
Using the explicit weight $\phi_{1.999999, 2.474917}$, it is possible to deduce a stronger Carleman inequality and thus to prove backward uniqueness of the heat equation in conical domains with angles down to approximately $95.4^{\circ}$. \\
Once an admissible, improved weight is found, the techniques of the proof of the backward uniqueness property are not new; in fact we argue along the same lines as \v{S}ver\'ak and Li \cite{LiSv}.
As already indicated by the phase space considerations the proof has to use the pseudoconvexity properties of the weight. Although the proof will not be a phase space argument but will instead rely on a direct argument, the previous considerations form the basis of the result. Having ensured pseudoconvexity in the spatial variables, the final weight function can be chosen to have the following time dependence:
\begin{align*}
&\phi(t,r,\va)= \phi_{1}(t,r,\va) + \phi_{2}(t), \\
&\phi_{1}(r,t,\va)=\frac{1-t}{t}\phi_{1.999999, 2.474917}(r,\va), \ \phi_{2}(t)=\epsilon (1-t)^2,
\end{align*}
for a sufficiently small constant $0<\epsilon\ll 1$ to be chosen later.

\begin{proof}[Proof of Proposition \ref{prop:Carl}]
Let $u\in C_{0}^{\infty}((0,T)\times(\Omega_{\theta}\setminus B_{R}))$, $R\gg 1$. Conjugating the heat operator gives
\begin{align*}
L_{\phi} u & = (\D+ \tau^2|\nabla \phi|^{2} -2 \tau \nabla \phi\cdot \nabla  - \tau \D \phi  + \dt - \tau \dt \phi)u,\\
A_{\phi}u & = (\dt- 2\tau \nabla \phi\cdot \nabla - \tau \D\phi)u,\\
S_{\phi}u &= (\D + \tau^2|\nabla \phi|^2 -\tau \dt \phi)u.
\end{align*}
Therefore the $L^{2}$ norm of the operator turns into
\begin{align*}
\int|L_{\phi}u|^2dxdt = \int|A_{\phi}u|^2dxdt + \int|S_{\phi}u|^2dxdt + \int([S_{\phi},A_{\phi}]u,u)dxdt.
\end{align*}
The idea is to derive the lower bound by using a combination of the commutator and the symmetric part of the operator. A short calculation yields
\begin{align*}
\int ([S_{\phi},A_{\phi}]u,u)dxdt = & \ 4\int (\tau^{3}\nabla \phi \cdot \nabla^2 \phi \nabla \phi u^2+\tau \nabla u \cdot \nabla^2 \phi \nabla u)dxdt \\
& + \int (-2\tau^2 \dt|\nabla \phi|^2 +\tau  \dt^2 \phi - \tau  \D^2\phi )u^2 dxdt.
\end{align*}
As in the case of \v{S}ver\'ak and Li \cite{LiSv}, the difficulty originates from the fact that the Hessian of the weight function is not globally positive-definite (which, however, still suffices for our purposes as pseudoconvexity is a strictly weaker condition than the usual notion of convexity). Nevertheless, the numerical analysis of the pseudoconvexity properties of this weight function suggests that on the characteristic set of the symmetric and antisymmetric parts the commutator provides sufficient positivity for the Carleman inequality to hold true. In real space, this condition can be realized by deducing positivity from a combination of the commutator and the symmetric part. As \v{S}ver\'ak and Li, we introduce an auxiliary function $F(t,x)$. An integration by parts gives
\begin{align*}
\int(S_{\phi}u,Fu) dxdt = \int -F|\nabla u|^2 + (\frac{1}{2} \D F + \tau^2F |\nabla \phi|^2 - \tau \dt \phi F)u^2 dxdt.
\end{align*} 
Hence, by the binomial formula
\begin{align*}
\int (S_{\phi}u,S_{\phi}u) dxdt  \geq & \ - \int (S_{\phi}u,Fu) dxdt - \frac{1}{4}\int F^2u^2dx dt \\
\geq & \ \int F |\nabla u|^2 - (\frac{1}{2} \D F + \tau^2 F |\nabla \phi|^2 - \tau \dt \phi F)u^2 dxdt\\
&\ - \frac{1}{4}\int F^2u^2dx dt. 
\end{align*}
As in the paper of \v{S}ver\'ak and Li \cite{LiSv}, the combination of the commutator and the symmetric part yield
\begin{align*}
\int([S_{\phi},A_{\phi}]u,u) dxdt + \int|S_ {\phi}u|^2dxdt &\geq 
 \int (4\tau^{3}\nabla \phi \cdot \nabla^2 \phi \nabla \phi u^2 - \tau^2 F |\nabla \phi|^2u^2)dxdt\\
& +\int   (F|\nabla u|^2+4 \tau \nabla u \cdot \nabla^2 \phi \nabla u) dxdt\\
& + \int (-2\tau^2 \dt|\nabla \phi|^2 +\tau  \dt^2 \phi - \tau  \D^2\phi )u^2 dxdt\\
& - \int (\frac{1}{2} \D F  - \tau  \dt \phi F)u^2 dxdt \\
&- \frac{1}{4}\int F^2u^2dx dt.
\end{align*}
In order to derive positivity for the gradient term, we set
\begin{align*}
F= - 4\tau \lambda_{\min}(\nabla^2 \phi_{1}) +\frac{2}{5}.
\end{align*}
We remark that for our choice of $\phi$ the smallest eigenvalue of the Hessian of $\phi$, $\lambda_{\min}(\nabla^2 \phi)$, is a smooth function of both the angular and the radial variables if $r>0$ (c.f. Figure \ref{fig:eigen}). Thus, no additional mollification is necessary in order to deal with expressions as for instance $\D F$.
\begin{figure}[h]
\centering
\includegraphics[scale=0.95]{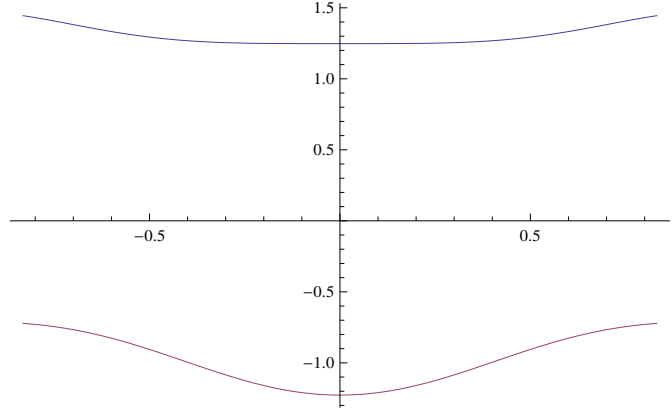}
\caption{The eigenvalues of the Hessian of the weight function depending on the angular variable $\va$ for fixed radial and temporal variables. Due to the concavity along the angular and the convexity along the radial directions, the eigenvalues have a fixed sign and do not cross. In particular, no mollification is needed in order to deal with the derivatives of the auxiliary function $F$.}
\label{fig:eigen}
\end{figure}\\
The choice of $F$ immediately implies
\begin{align*}
\int   F|\nabla u|^2+ 4\tau \nabla u \cdot \nabla^2 \phi \nabla u dxdt \geq \frac{2}{5} \int |\nabla u|^2dxdt.
\end{align*}
As the weight function satisfies the pseudoconvexity condition, we also obtain
\begin{align*}
4 \tau^3 \int  \nabla \phi \cdot \nabla^2 \phi \nabla \phi u^2 + \lambda_{\min}(\nabla^2 \phi)|\nabla \phi|^2 u^2 dxdt\geq 0.
\end{align*}
As a consequence, it remains to prove the positivity of the following terms
\begin{align*}
& \int (-2\tau^2 \dt|\nabla \phi|^2 +\tau  \dt^2 \phi - \tau  \D^2\phi - \frac{2\tau^2}{5}|\nabla \phi|^2)u^2 dxdt\\
& - \int (\frac{1}{2} \D F  - \tau  \dt \phi F)u^2 dxdt - \frac{1}{4}\int F^2u^2dx dt .
\end{align*}

We begin with the terms of order $\tau^2$ and treat the terms involving $\phi_{1}$ and $\phi_{2}$ separately:
We start by estimating the $\phi_{1}$ contributions. Moreover, we note that by choosing $R\gg 1$ sufficiently large, the scaling of $\lambda_{\min}(\nabla^2 \phi)$ in the radial variable implies that the $\tau^2$ contribution coming from the $\frac{1}{4}\int F^2u^2dx dt $ integral can be considered small with respect to the other terms of $\tau^2$ scaling. Thus, it will be ignored in the sequel.
Due to the homogeneity of the remaining terms in the radial variable ($-2\tau^2 \dt|\nabla \phi|^2   - 4 \tau^2  \dt \phi_{1} \lambda_{\min}(\nabla^2 \phi)- \frac{2\tau^2}{5}|\nabla \phi|^2 \sim r^{2\alpha -2}$, $\alpha = 1.999999$) and the multiplicative temporal dependence of the weight, the lower bound
\begin{equation}
\begin{split}
\label{eq:sec}
 \int (-2\tau^2 \dt|\nabla \phi|^2  -  4\tau^2  \dt \phi_{1} \lambda_{\min}(\nabla^2 \phi)- \frac{2\tau^2}{5}|\nabla \phi|^2)u^2 dxdt\\ \geq c\tau^2 \int \frac{(1-t)}{t^2}u^2dxdt
\end{split} 
\end{equation}
follows, once it is established in an (angular) cross-section of the domain. 
In order to deduce this estimate, we observe
\begin{align*}
\lambda_{\min}(\nabla^2 \phi) \dt \phi_{1} & = - \frac{1-t}{t^2} \phi_{\alpha, \beta}(x) \lambda_{\min}(\nabla^2 \phi_{\alpha,\beta}) - \frac{(1-t)^2}{t^3} \phi_{\alpha,\beta}\lambda_{\min}(\nabla^2 \phi_{\alpha, \beta}),\\
\dt |\nabla \phi|^2 &= -2 \frac{1-t}{t^2}|\nabla \phi_{\alpha, \beta}|^2 - 2 \frac{(1-t)^2}{t^3}|\nabla \phi_{\alpha,\beta}|^2,
\end{align*}
where $\alpha = 1.9999999$, $\beta= 2.474917$.
Thus, it suffices to prove the positivity of 
\begin{align*}
3.6|\nabla \phi_{\alpha, \beta}|^2 + 4\lambda_{\min}(\nabla^2 \phi_{\alpha, \beta})\phi_{\alpha, \beta}.
\end{align*}
As this expression attains a local minimum at $\va=0$ and is non-negative at that point, the desired positivity follows, c.f. Fig. \ref{fig:sec}.
\begin{figure}[h]
\centering
\includegraphics[scale=0.6]{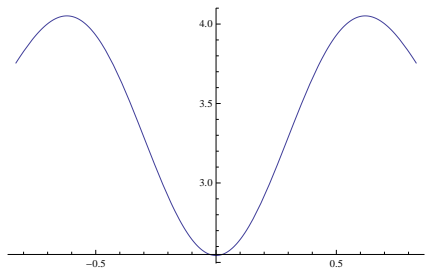} 
\caption{The figure depicts the term $3.6|\nabla \phi_{\alpha, \beta}|^2   + 4  \lambda_{\min}(\nabla^2 \phi_{\alpha,\beta})\phi_{\alpha, \beta}$ with $\alpha = 1.9999999$, $\beta= 2.474917$  in an angular cross-section of the domain. The numerical evaluation shows that this expression is positive.}
\label{fig:sec}
\end{figure}
In order to estimate the full contribution in $\tau^2$, it remains to bound
\begin{align*}
- 4\tau^2 \int \dt \phi_{2}\lambda_{\min}(\nabla^2 \phi) u dxdt = -8\tau^2 \epsilon \int (1-t) \lambda_{\min}(\nabla^2 \phi)u^2 dxdt. 
\end{align*}
For sufficiently small $\epsilon$ this can be absorbed into the right hand side of (\ref{eq:sec}).\\
We proceed with the terms of $\tau$-scaling. 
Due to the positivity of $\dt^2\phi_{i}$, the estimate $|\dt^2 \phi_{i} | \geq |\dt \phi_{i}|$, $i\in \{1,2\}$, and the scaling in the radial direction, the last term,
\begin{align*}
\tau \int (\dt^2 \phi - \D^2\phi - \frac{\tau^{-1}}{2}\D F+ \frac{2}{5}\dt \phi + \frac{2}{5}\lambda_{\min}(\nabla \phi_{1}))u^2 dxdt, 
\end{align*}
is positive in the spatial interior of the domain (which in particular includes the time slice $t=1$) if a sufficiently large ball around the origin is excluded. Close to the spatial boundary the scaling of the involved terms allows to absorb the (potentially) negative parts, i.e.
\begin{align*}
\tau \int (- \D^2\phi - \frac{\tau^{-1}}{2}\D F + \frac{2}{5}\lambda_{\min}(\nabla \phi_{1}) )u^2 dxdt, 
\end{align*}
into (\ref{eq:sec}) for sufficiently large $\tau \geq \tau_{0}$ (here it is possible to ignore the also potentially negative contribution $\frac{2}{5}\dt \phi$ as it can always be absorbed into the larger positive term $\dt^2 \phi$).\\
Furthermore, a small amount of the $\dt^2 \phi_{1}$ contribution suffices to control the negative $\dt \phi_{1}$ derivative. Hence, we obtain a further positive contribution of the form
\begin{align*}
\tau \int (\dt^2 \phi + \frac{2}{5}\dt \phi)u^2 dxdt \gtrsim \tau \int u^2 dxdt.
\end{align*}
For sufficiently large $\tau$, this contribution can then be used to absorb the last negative term: $-\frac{1}{25}\int u^2 dxdt $.
\end{proof}

\section{Proof of the Backward Uniqueness Result}
\label{sec:bur}
Due to Lemma \ref{lem:Sverak} null-controllable solutions of the heat equation have exponential decay: 
For solutions of (\ref{eq:heat}) the estimate of \v{S}ver\'ak, Seregin and Escauriaza yields $|u|\leq Ce^{-c \frac{ dist(x,\partial \Omega)^{2} }{t}}$ -- which is, at first sight, only a \textit{non-uniform} decay estimate, deteriorating close to the boundary of the domain. Considering angles strictly larger than $90^{\circ}$, it is possible to reduce the angle slightly while still remaining arbitrarily close to the original angle. In this case \v{S}ver\'ak's inequality implies a \textit{uniform} decay estimate:
\begin{align*}
|u|\leq C e^{-c |x|^{2}}.
\end{align*}

In order to deduce the backward uniqueness property, we use the following strategy of proof:
\begin{itemize}
\item In the first step the angle is reduced slightly, so as to obtain the Gaussian decay estimate globally.
\item Secondly, with the aid of a cut-off function, which is active at the boundary of the domain as well as close to the (spatial) origin, the Carleman estimate can be applied. 
\item Finally, carrying out the limit $\tau \rightarrow \infty$ provides the desired conclusion.
\end{itemize}

\begin{proof}[Proof of Proposition \ref{prop:backwarduniqueness}]
Step 1: \textit{ Decay estimate, rescaling and choice of the test functions.}\\

We choose $\epsilon>0$ such that $\delta:=\theta-\epsilon \geq \theta_{0}$ where $\theta_0$ is the angle down to which our Carleman inequalities hold (e.g. $\theta_0= 95.4^{\circ}$). Lemma \ref{lem:Sverak} implies Gaussian decay:
\begin{align*}
|u|\leq Ce^{-c \frac{ dist(x,\partial \Omega_{\theta})^{2} }{t}}  \leq Ce^{-c \frac{ sin^2(\epsilon/2)|x|^{2} }{t}}.
\end{align*}

 Due to the assumptions, we have to deal with the differential inequality
\begin{align*}
|(\dt + \D)u| \leq C(|u|+|\nabla u|).
\end{align*}
Rescaling $ u$ parabolically and translating, i.e. 
\begin{align*}
u_{\lambda}(x,t):= u\left(\lambda^{2}\left(t- \frac{1}{2}\right),\lambda x \right),
\end{align*}
we obtain a ``small" right hand side:
\begin{equation}
\begin{split}
\label{eq:lambda}
|(\dt + \D)u_{\lambda}| &\leq C\lambda (|u_{\lambda}|+|\nabla u_{\lambda}|) \mbox{ in } [0,1]\times \Omega_{\delta},\\
u_{\lambda}(t,x) &= 0 \mbox{ in } \left(0,\frac{1}{2}\right)\times \Omega_{\delta}.
\end{split}
\end{equation}
With slight abuse of notation, we will work with $u$ satisfying (\ref{eq:lambda}) in the sequel without changing notation.\\

In order to apply the Carleman estimate, it is necessary to cut off the solution of the heat equation. Due to this, we introduce the cut-off functions 
\begin{align*}
w_{1,R}(x_{1}):=  \left\{ \begin{array}{ll}
				0, & x_{1}\leq R,\\
				1, & x_{1}\geq 2R,
			\end{array} \right.   \ \
w_{2}(s) :=  \left\{ \begin{array}{ll}
				0, & s \leq  - \frac{4}{3} ,\\
				1, & s \geq - \frac{1}{2},
			\end{array} \right.
\end{align*}
which are chosen to be smooth interpolations in the intermediate regime.
Furthermore, we define
\begin{align*}
&w(x,t):= w_{1,R}(x_{1})w_{2}(\phi(t,x)-C),\\
&v(x,t) := w(x,t)u(x,t).
\end{align*}
Although $v$ does not have compact support, an additional limiting argument combined with the Gaussian decay rate of this function, implies its admissibility in the Carleman estimate.\\

Step 2: \textit{Application of the parabolic Carleman inequality and limit} $\tau \rightarrow \infty$.
An application of the Carleman inequality (\ref{eq:Carlm}) leads to
\begin{align}
\label{eq:Carl}
\tau^{\frac{1}{2}} \left\| e^{\tau (\phi-C)}v\right\|_{L^{2}} + \left\| e^{\tau( \phi-C)}\nabla v\right\|_{L^{2}} \leq \left\| e^{\tau (\phi-C)}(\dt+ \D)v\right\|_{L^{2}}.
\end{align}
Estimating the right hand side results in
\begin{align*}
|(\dt+\D)v| \leq C\lambda(|v|+ |\nabla v|) + C(|u|+|\nabla u|)(|\dt w| + |\nabla w| + |\D w| );
\end{align*}
Due to the smallness of $\lambda$, the first part of the expression can be absorbed in the left hand side of (\ref{eq:Carl}). For the remaining part, i.e. $ C(|u|+|\nabla u|)(|\dt w| + |\nabla w| + |\D w|) $, we use the definition of $w$.  Indeed, in the set on which $ C(|\dt w| + |\nabla w| + |\D w|) \neq 0$, we have $\phi-C\leq -\frac{1}{2}$. Consequently, 
\begin{align*}
&C \left\| e^{\tau (\phi-C)}(|u|+|\nabla u|)(|\dt w| + |\nabla w| + |\D w|)|\right\|_{L^{2}}\\
 & \leq C \left\| e^{-\frac{\tau}{2}}(|u|+|\nabla u|)(|\dt w| + |\nabla w| + |\D w|)|\right\|_{L^{2}}\\
& \leq  C \left\| e^{-\frac{\tau}{2}-\beta |x|^{2}}P_{\phi}(x)\right\|_{L^{2}},
\end{align*}
where $P_{\phi}(x)$ has at most polynomial growth. In the limit $\tau \rightarrow \infty$ the right hand side of the inequality vanishes. Hence,
\begin{align*}
\tau^{\frac{1}{2}} \left\| e^{\tau (\phi-C)}v\right\|_{L^{2}} + \left\| e^{\tau( \phi-C)}\nabla v\right\|_{L^{2}}& \leq C \left\| e^{-\frac{\tau}{2}}e^{-\beta |x|^{2}} P_{\phi}(x) \right\|_{L^{2}} \rightarrow 0 \mbox{ as } \tau \rightarrow \infty.
\end{align*}
As a result,
\begin{align*}
u = 0 \mbox{ in } \Omega_{\delta}\cap\{\phi-C \geq 0\}.
\end{align*}
Now unique continuation across spatial boundaries implies the desired result.
\end{proof}

\section[Reduction to the Elliptic Problem]{Backward Uniqueness by Reduction to the Elliptic Problem}
\label{sec:proI}

In this section we illustrate how the uniqueness of parabolic controls can be reduced to an elliptic problem. For this elliptic problem we prove a Carleman estimate.

\subsection{The Elliptic Pseudoconvexity Analysis}
As before, a key step in proving the desired Carleman estimates consists of the analysis of the conjugated operator. In choosing our weight, we consider a general weight of the form $\tau \phi$. Calculating the conjugated operator yields
\begin{align*}
L_{\phi} = e^{\tau \phi}\D e^{-\tau \phi} = \D - 2\tau \nabla \phi \cdot \nabla + \tau^2 |\nabla \phi|^2 - \tau \D \phi.
\end{align*}
The symmetric and antisymmetric parts of this operator are given by
\begin{align*}
S_{\phi} &= \D + \tau^2 |\nabla \phi|^2,\\ 
A_{\phi} &= -2\tau \nabla \phi \cdot \nabla - \tau \D \phi.
\end{align*}
We remark that although the original operator was elliptic, the resulting symmetric and antisymmetric parts of the conjugated operator are not elliptic anymore. Expanding the $L^{2}$ norm of $L_{\phi}$, we thus infer
\begin{align*}
\left\| L_{\phi} w \right\|_{L^{2}(\Omega)}^2 = \left\| S_{\phi} w\right\|_{L^{2}(\Omega)}^2 + \left\| A_{\phi} w\right\|_{L^{2}(\Om)}^2 + \int\limits_{\Om}([S_{\phi}, A_{\phi}]w,w)dx,
\end{align*}
for $w\in C_{0}^{\infty}(\Omega)$. Hence, on the intersections of the characteristic sets of the symmetric and antisymmetric parts, the necessary amount of positivity has to originate from the commutator:
\begin{align*}
 \int\limits_{\Om}([S_{\phi}, A_{\phi}]w,w)dx = \int\limits_{\Om}4\tau^3 \nabla \phi\cdot \nabla^2 \phi \nabla \phi w^2 + 4\tau \nabla w\cdot \nabla^2 \phi \nabla w - \tau \D^2 \phi w^2 dx.
\end{align*} 
In order to understand the behaviour of this expression, it is helpful to switch to a microlocal point of view. The principal symbols of the symmetric, antisymmetric and commutator part turn into
\begin{align*}
p^{r} &= -|\xi|^2 + \tau^2|\nabla \phi|^2,\\
p^{i} &= -2\tau \nabla \phi \cdot \xi,\\
\{p^r,p^i\} &= 4(\tau^3 \nabla \phi \cdot \nabla^2 \phi \nabla \phi + \tau \xi\cdot \nabla^2 \phi \xi).
\end{align*}
Therefore, the intersection of the characteristic set of the symmetric and antisymmetric parts of the operator is given by
\begin{align*}
\{ |\xi|^2 = \tau^2 |\nabla \phi|^2 \} \cap \{\nabla \phi \cdot  \xi =0\}.
\end{align*}
In the two-dimensional setting this leads to simplifications in the Poisson bracket:
\begin{align*}
\{p^r,p^i\} = 4\tau^3 \D \phi|\nabla \phi|^2 \mbox{ in } \{ |\xi|^2 = \tau^2 |\nabla \phi|^2 \} \cap \{\nabla \phi \cdot  \xi =0\}.
\end{align*}
Hence, the corresponding pseudoconvexity condition for the weight turns into subharmonicity:
\begin{align*}
\D \phi \geq 0  \mbox{ in } \{ |\xi|^2 = \tau^2 |\nabla \phi|^2 \} \cap \{\nabla \phi \cdot  \xi =0\}.
\end{align*}
In the sequel we will construct weights satisfying this property with sufficient decay in infinity. These are exactly the ``limiting Carleman weights" of Kenig et al. \cite{DKSU}, \cite{KSU}.

\subsection{Carleman Inequalities for the Laplacian in Conical Domains}

Before turning to the proof of Proposition \ref{prop:backwaruniquness-}, we first focus on Carleman inequalities for the Laplacian on conical domains. For this purpose, we use weights which are concentrated in the interior of the domain and vanish on the boundary --  the necessity of this stems form the lack of control of the boundary and initial data. The explicit choice of the weight is motivated by the requirement of satisfying the elliptic pseudoconvexity condition -- which amounts to a considerably easier condition than the corresponding parabolic analogue.\\

We prove the Carleman estimate by rescaling a local estimate. As the weight which we use satisfies a strict pseudoconvexity condition on $\Omega_{\theta}\setminus B_{1}$, the symbol calculus directly implies the estimate

\begin{prop}
\label{prop:Carlloc}
Let $\Omega_{\theta} \subset \R^{2}$ be the conical domain defined above with $\frac{\pi}{2}<\theta < \pi $. Let $\phi(x,y)= \Re((x+iy)^{\alpha}) + \epsilon x^{\alpha}$, with $\alpha = \frac{\pi}{\theta}$, $\epsilon>0$ arbitrary. Then  for $\tau\geq \tau_0 >0$ it holds
\begin{align*}
\tau^{3}\left\| e^{\tau\phi}|x|^{\frac{3 \alpha -4}{2}} u \right\|_{L^{2}(\Omega_{\theta}\cap (B_{2}\setminus B_{1}))}^2 + \tau \left\|  e^{\tau \phi}|x|^{\frac{\alpha -2}{2}}\nabla u\right\|_{L^{2}(\Omega_{\theta}\cap (B_{2}\setminus B_{1}))}^2\\
\lesssim \left\| e^{\tau \phi} \Delta u \right\|_{L^{2}(\Omega_{\theta}\cap (B_{2}\setminus B_{1}))}^{2}
\end{align*}
for all $u \in C_{0}^{\infty}(\Omega_{\theta}\cap (B_{2}\setminus B_{1}))$.
\end{prop}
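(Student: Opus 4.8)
The plan is to prove this local estimate by the standard symbol-calculus (Gårding) argument for a strongly pseudoconvex weight, exploiting that the conjugation and the $L^2$-splitting $\|L_\phi w\|^2 = \|S_\phi w\|^2 + \|A_\phi w\|^2 + \int([S_\phi,A_\phi]w,w)\,dx$ have already been carried out above for $w = e^{\tau\phi}u$. The only genuinely new input is to check that $\phi$ is \emph{strictly} pseudoconvex on the relevant region and then feed this into the commutator positivity. A useful preliminary remark is that on the fixed annular sector $\Omega_\theta\cap(B_2\setminus B_1)$ the radial weights $|x|^{(3\alpha-4)/2}$ and $|x|^{(\alpha-2)/2}$ are bounded above and below by positive constants; hence proving the stated inequality is equivalent to proving the unweighted estimate $\tau^3\|e^{\tau\phi}u\|^2 + \tau\|e^{\tau\phi}\nabla u\|^2 \lesssim \|e^{\tau\phi}\Delta u\|^2$ on this set. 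The explicit $|x|$-powers are recorded only so that the dyadic rescaling $r\mapsto 2^k r$, used afterwards to pass from this annulus to all of $\Omega_\theta\setminus B_1$, matches the homogeneities $\nabla\phi\sim r^{\alpha-1}$, $\nabla^2\phi\sim r^{\alpha-2}$.

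First I would verify the pseudoconvexity. Writing $z = x + iy$ and $\phi_0(z) = \Re(z^\alpha) = r^\alpha\cos(\alpha\varphi)$, the function $\phi_0$ is harmonic, so it is a limiting Carleman weight in the sense of \cite{KSU}, \cite{DKSU}, for which $\{p^r,p^i\}$ vanishes on the characteristic set. The role of the perturbation $\epsilon x^\alpha$ is precisely to turn this borderline weight into a strictly subharmonic one: since $\alpha = \pi/\theta\in(1,2)$ for $\tfrac{\pi}{2}<\theta<\pi$, and since $x = x_1\geq |x|\cos(\theta/2)\geq\cos(\theta/2) > 0$ on $\Omega_\theta\cap(B_2\setminus B_1)$, one has $\Delta\phi = \epsilon\,\Delta(x^\alpha) = \epsilon\,\alpha(\alpha-1)\,x^{\alpha-2} > 0$, bounded below by a positive constant. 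Combined with the two-dimensional reduction already derived, namely $\{p^r,p^i\} = 4\tau^3\,\Delta\phi\,|\nabla\phi|^2$ on $\{|\xi|^2 = \tau^2|\nabla\phi|^2\}\cap\{\nabla\phi\cdot\xi = 0\}$, this gives strict positivity there, provided $|\nabla\phi|\neq 0$. The latter is immediate from Cauchy--Riemann: $|\nabla\phi_0| = |\alpha z^{\alpha-1}| = \alpha r^{\alpha-1}$, and the only possible cancellation with the real vector $\epsilon\alpha x^{\alpha-1}e_1$ is ruled out because the $y$-component $-\alpha r^{\alpha-1}\sin((\alpha-1)\varphi)$ vanishes only at $\varphi = 0$, where the $x$-component equals $\alpha r^{\alpha-1}(1+\epsilon)>0$. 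Hence $\phi$ is strongly pseudoconvex on $\Omega_\theta\setminus B_1$.

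Next I would convert this microlocal positivity into the $L^2$ estimate. On the characteristic set the commutator contributes the leading term $4\tau^3\int\nabla\phi\cdot\nabla^2\phi\nabla\phi\,w^2$ together with the gradient term $4\tau\int\nabla w\cdot\nabla^2\phi\nabla w$, while $\|S_\phi w\|^2$ and $\|A_\phi w\|^2$ supply ellipticity away from the characteristic set. The standard Gårding/Hörmander machinery for pseudoconvex weights then yields, for $\tau\geq\tau_0$, a lower bound $\gtrsim \tau^3\|w\|^2 + \tau\|\nabla w\|^2$; the powers of $\tau$ are exactly the homogeneities of the two commutator contributions ($\tau^3\,\Delta\phi|\nabla\phi|^2$ for $w^2$ and $\tau\,\xi\cdot\nabla^2\phi\xi$ for $\nabla w$), and the lower-order terms $-\tau\Delta^2\phi\,w^2$ together with the subprincipal pieces are absorbed by taking $\tau_0$ large. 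Finally, undoing the conjugation via $\nabla w = e^{\tau\phi}(\nabla u + \tau u\,\nabla\phi)$ turns $\|\nabla w\|^2$ into $\|e^{\tau\phi}\nabla u\|^2$ up to the term $\tau^2\|e^{\tau\phi}|\nabla\phi|\,u\|^2 \lesssim \tau^2\|e^{\tau\phi}u\|^2$ on the annulus, which is absorbed by the dominant $\tau^3\|e^{\tau\phi}u\|^2$; since $\|e^{\tau\phi}u\| = \|w\|$, this produces the stated inequality.

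The main obstacle is that $\nabla^2\phi$ is \emph{indefinite}: the harmonic part $\phi_0$ satisfies $\Delta\phi_0 = 0$, so its Hessian has eigenvalues $\pm\lambda$, and the small $\epsilon$-perturbation only shifts the trace by $O(\epsilon)$. Consequently the gradient commutator $4\tau\int\nabla w\cdot\nabla^2\phi\nabla w$ is \emph{not} sign-definite, and the positivity of the total commutator genuinely lives only on the characteristic set $\{\nabla\phi\cdot\xi = 0\}$, where the trace identity $\nabla\phi\cdot\nabla^2\phi\nabla\phi + (\nabla\phi)^\perp\cdot\nabla^2\phi(\nabla\phi)^\perp = |\nabla\phi|^2\Delta\phi$ combines the normal and tangential contributions into the strictly positive quantity $|\nabla\phi|^2\Delta\phi$. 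This is exactly the point where one needs the microlocal localization rather than a naive pointwise bound; equivalently, in a purely real-variable proof one would introduce an auxiliary function $F\sim -4\tau\lambda_{\min}(\nabla^2\phi)$ as in the parabolic argument above and borrow from $\|S_\phi w\|^2$ to convexify the gradient term, at the cost of verifying the sharper pointwise condition $\nabla\phi\cdot\nabla^2\phi\nabla\phi + \lambda_{\min}(\nabla^2\phi)|\nabla\phi|^2\geq 0$.
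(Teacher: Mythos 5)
Your proposal is correct and follows the same route as the paper, whose proof of this proposition consists of a single sentence deferring to ``a pseudoconvexity analysis'' with references to the standard literature; you have simply carried out that analysis explicitly. Your verification that $\Delta\phi=\epsilon\,\alpha(\alpha-1)x^{\alpha-2}>0$ and $\nabla\phi\neq 0$ on the annular sector, combined with the two-dimensional reduction $\{p^r,p^i\}=4\tau^3\Delta\phi\,|\nabla\phi|^2$ already derived in the paper, is exactly the strict pseudoconvexity check the cited machinery requires, and the $\tau$-powers and conjugation bookkeeping are handled correctly.
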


\begin{proof}[Proof of Proposition \ref{prop:Carlloc}] This follows immediately from a pseudoconvexity analysis, see for example \cite{Tu1}, \cite{Tat}. \end{proof}

With this and the scaling properties of the weight, the global estimate can be obtained via a decomposition and rescaling procedure.

\begin{prop}
\label{prop:Carl1}
Let $\Omega_{\theta} \subset \R^{2}$ be the conical domain defined above with $\frac{\pi}{2}<\theta < \pi $. Set $\phi(x,y)= \Re((x+iy)^{\alpha}) + \epsilon x^{\alpha}$, with $\alpha = \frac{\pi}{\theta}$, $\epsilon>0$ arbitrary. Then for $\tau \geq \tau_0 >0$ we have
\begin{align}
\label{eq:Carlel}
\tau^{3}\left\| e^{\tau\phi} |x|^\frac{3\alpha -4}{2} u \right\|_{L^{2}(\Omega_{\theta}\setminus B_{1})}^2 + \tau \left\| e^{\tau \phi}|x|^{\frac{\alpha-2}{2}}\nabla u\right\|_{L^{2}(\Omega_{\theta}\setminus B_{1})}^2
\lesssim \left\| e^{\tau \phi} \Delta u \right\|_{L^{2}(\Omega_{\theta}\setminus B_{1})}^{2}
\end{align}
for all $u \in C_{0}^{\infty}(\Omega_{\theta}\setminus B_{1}(0)).$
\end{prop}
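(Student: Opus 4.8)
The plan is to obtain the global estimate from the local one of Proposition \ref{prop:Carlloc} by a dyadic decomposition and rescaling argument that exploits the homogeneity of the weight. Writing $x+iy=re^{i\varphi}$ one sees that both $\Re((x+iy)^{\alpha})=r^{\alpha}\cos(\alpha\varphi)$ and $x^{\alpha}$ are homogeneous of degree $\alpha$, so that $\phi(\mu x)=\mu^{\alpha}\phi(x)$ for every $\mu>0$. The powers $|x|^{\frac{3\alpha-4}{2}}$ and $|x|^{\frac{\alpha-2}{2}}$ appearing in \eqref{eq:Carlel} are precisely the ones making the two sides of the inequality transform homogeneously under dilations; this is what allows the local estimate, which a priori holds only on the fixed annulus $\Omega_{\theta}\cap(B_{2}\setminus B_{1})$, to be transported to every dyadic scale with a scale-independent constant.

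First I would fix $j\geq 0$ and rescale by $\mu=2^{j}$: given a function supported in the dyadic annulus $A_{j}:=\Omega_{\theta}\cap(B_{2^{j+1}}\setminus B_{2^{j}})$, set $\tilde u(y):=u(2^{j}y)$, which is then supported in $\Omega_{\theta}\cap(B_{2}\setminus B_{1})$. Using $\nabla_{x}u=2^{-j}\nabla_{y}\tilde u$, $\Delta_{x}u=2^{-2j}\Delta_{y}\tilde u$, $|x|=2^{j}|y|$, $dx=2^{2j}\,dy$ together with $\phi(2^{j}y)=2^{j\alpha}\phi(y)$, one checks that $e^{\tau\phi(x)}=e^{\tilde\tau\phi(y)}$ with the rescaled parameter $\tilde\tau:=2^{j\alpha}\tau$. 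Applying Proposition \ref{prop:Carlloc} with this $\tilde\tau\geq\tau_{0}$ to $\tilde u$ and changing variables back, a direct computation shows that each of the three terms acquires exactly the same prefactor $2^{2j}$, which therefore cancels. This produces the annular estimate on $A_{j}$ with a constant independent of $j$:
\begin{align*}
\tau^{3}\left\| e^{\tau\phi}|x|^{\frac{3\alpha-4}{2}}u\right\|_{L^{2}(A_{j})}^{2}+\tau\left\| e^{\tau\phi}|x|^{\frac{\alpha-2}{2}}\nabla u\right\|_{L^{2}(A_{j})}^{2}\lesssim\left\| e^{\tau\phi}\Delta u\right\|_{L^{2}(A_{j})}^{2}.
\end{align*}
For the overlaps needed below one enlarges the fixed annulus slightly, which is harmless since Proposition \ref{prop:Carlloc} holds on any compact annulus bounded away from the origin.

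Next I would globalize via a partition of unity. Choose smooth $\chi_{j}$ subordinate to a cover of $\Omega_{\theta}\setminus B_{1}$ by (slightly enlarged, boundedly overlapping) dyadic annuli, with $\sum_{j}\chi_{j}=1$ and the natural bounds $|\nabla\chi_{j}|\lesssim|x|^{-1}$, $|\Delta\chi_{j}|\lesssim|x|^{-2}$. Applying the annular estimate to each $\chi_{j}u$ and summing over $j$, the bounded overlap lets me bound $\sum_{j}\|e^{\tau\phi}\chi_{j}\Delta u\|^{2}\lesssim\|e^{\tau\phi}\Delta u\|^{2}$ and, via $\sum_{j}\chi_{j}^{2}\gtrsim 1$, recover the full weighted $L^{2}$ norms of $u$ and $\nabla u$ on the left. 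The error comes from the commutators $\Delta(\chi_{j}u)-\chi_{j}\Delta u=2\nabla\chi_{j}\cdot\nabla u+(\Delta\chi_{j})u$ and from $\nabla(\chi_{j}u)-\chi_{j}\nabla u=(\nabla\chi_{j})u$; by the derivative bounds these are controlled, after summation, by integrals of the form $\int e^{2\tau\phi}|x|^{-2}|\nabla u|^{2}$ and $\int e^{2\tau\phi}|x|^{\alpha-4}|u|^{2}$.

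The main obstacle, and the only point where the hypothesis $|x|\geq 1$ is essential, is the absorption of these error terms into the left-hand side. Comparing $|x|^{-2}|\nabla u|^{2}$ with the left-hand gradient term $\tau|x|^{\alpha-2}|\nabla u|^{2}$ requires $1\lesssim\tau|x|^{\alpha}$, and comparing $|x|^{\alpha-4}|u|^{2}$ with $\tau^{3}|x|^{3\alpha-4}|u|^{2}$ requires $1\lesssim\tau^{3}|x|^{2\alpha}$. Since $\alpha>0$ and $|x|\geq 1$ throughout $\Omega_{\theta}\setminus B_{1}$, both hold for every $\tau\geq\tau_{0}$ with $\tau_{0}$ large enough, uniformly in $x$ and $j$, so the errors are absorbed and \eqref{eq:Carlel} follows. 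I would emphasize that it is precisely this separation from the origin that drives the argument: near $r=0$ the positive powers of $\tau$ would no longer dominate the negative powers of $|x|$ produced by the cutoffs, which is exactly why the estimate is stated on the exterior region $\Omega_{\theta}\setminus B_{1}$.
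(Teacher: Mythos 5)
Your proposal is correct and follows essentially the same route as the paper: a dyadic decomposition with a partition of unity, rescaling each annulus to the fixed one while exploiting the degree-$\alpha$ homogeneity of $\phi$ via the rescaled parameter $\tilde{\tau}=2^{j\alpha}\tau$, application of the local estimate of Proposition \ref{prop:Carlloc}, and absorption of the cutoff error terms using $|x|\geq 1$ and $\tau\geq\tau_{0}$ large. The only discrepancies are cosmetic bookkeeping (the common prefactor is $2^{-2j}$ rather than $2^{2j}$, and the zeroth-order error from $(\Delta\chi_{j})u$ carries $|x|^{-4}$ rather than $|x|^{\alpha-4}$), neither of which affects the argument.
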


\begin{proof}[Proof of Proposition \ref{prop:Carl1}]
Using a decomposition of space, a scaling argument yields the claim:
We decompose $u= \sum\limits_{i\in \N}u_{i}$, $u_{i}(x):= (u\eta_{i})(x):= u(x) \eta(\frac{|x|-2^{i}}{2^{i}})$, where 
$\supp(\eta)\subset (0.5,2.5)$, i.e. $\eta$  
is a cut-off function normalized so as to provide a partition of unity. Setting $v_{i}(x):= u_{i}(2^{i}x)$, we obtain
\begin{align*}
\tau^{3}\left\| e^{\tau\phi} |x|^\frac{3\alpha -4}{2} u \right\|_{L^{2}(\Omega_{\theta}\setminus B_{1})}^2 
& \lesssim \tau^{3}\sum\limits_{i\in \N}\left\||x|^\frac{3\alpha -4}{2} e^{\tau\phi} u_{i} \right\|_{L^{2}((\Omega_{\theta}\cap B_{2^{i+1}})\setminus B_{2^{i}})}^2 \\
& = \tau^{3}\sum\limits_{i\in \N} \left\| |2^{i}x|^\frac{3\alpha -4}{2} e^{\tau\phi(2^{i}x)} (\eta u)(2^{i}x) \right\|_{L^{2}((\Omega_{\theta}\cap B_{2 })\setminus B_{1})}^2 2^{in}\\
& \lesssim  \sum\limits_{i \in \N} \tilde{\tau}^{3}2^{-4i}\left\| e^{\tilde{\tau}\phi(x)} v_{i}(x) \right\|_{L^{2}((\Omega_{\theta}\cap B_{2 })\setminus B_{1})}^2 2^{in}\\
& \stackrel{Prop. \ref{prop:Carlloc}}{\lesssim} \sum\limits_{i\in \N} 2^{-4i}\left\| e^{\tilde{\tau}\phi(x)}  \Delta v_{i}(x) \right\|_{L^{2}((\Omega_{\theta}\cap B_{2 })\setminus B_{1})}^2 2^{in}\\
& \lesssim \sum\limits_{i\in \N}  \left\| e^{\tau\phi(x)} \Delta u_{i}(x) \right\|_{L^{2}((\Omega_{\theta}\cap B_{2^{i+1} })\setminus B_{2^{i}})}^2 \\
& \lesssim \left\| e^{\tau \phi} \Delta u \right\|_{L^{2}(\Omega_{\theta}\setminus B_{1})}^{2} \\
& \;\;\;\;\;+  \left\| e^{\tau \phi}|x|^{-1} |\nabla u| \right\|_{L^{2}(\Omega_{\theta}\setminus B_{1})}^{2}
+ \left\| e^{\tau \phi}|x|^{-2} u \right\|_{L^{2}(\Omega_{\theta}\setminus B_{1})}^{2},
\end{align*}
where we used the notation $\tilde{\tau}= 2^{i\alpha}\tau$.
In this estimate the last terms are error terms originating from the partition of unity. These will be absorbed in the left hand side for sufficiently large $\tau$.

Analogously, the result for the gradient term can be derived:
\begin{align*}
\tau \left\| e^{\tau \phi}|x|^{\frac{\alpha-2}{2}}\nabla u\right\|_{L^{2}(\Omega_{\theta}\setminus B_{1})}^2
& \lesssim \tau\sum\limits_{i\in \N}\left\| e^{\tau\phi} |x|^{\frac{\alpha-2}{2}} |\nabla u_{i}| \right\|_{L^{2}(\Omega_{\theta}\cap B_{2^{i+1}}\setminus B_{2^{i}})}^2 \\
& = \tau \sum\limits_{i\in \N} \left\| e^{\tau\phi(2^{i}x)} |2^{i}x|^{\frac{\alpha-2}{2}} |\nabla (u \eta)(2^{i}x)| \right\|_{L^{2}((\Omega_{\theta}\cap B_{2 })\setminus B_{1})}^2 2^{in}\\
& \lesssim \tilde{\tau} \sum\limits_{i \in \N} 2^{-4i} \left\| e^{\tilde{\tau}\phi(x)} |x|^{\frac{\alpha-2}{2}}|\nabla v_{i}(x)| \right\|_{L^{2}((\Omega_{\theta}\cap B_{2 })\setminus B_{1})}^2 2^{in}\\
& \stackrel{Prop. \ref{prop:Carlloc}}{\lesssim} \sum\limits_{i \in \N} 2^{ -4i}\left\| e^{\tilde{\tau}\phi(x)} \Delta v_{i}(x) \right\|_{L^{2}((\Omega_{\theta}\cap B_{2 })\setminus B_{1})}^2 2^{in}\\
& \lesssim \sum\limits_{i\in \N}  \left\| e^{\tau\phi(x)} \Delta u_{i}(x) \right\|_{L^{2}((\Omega_{\theta}\cap B_{2^{i+1} })\setminus B_{2^{i}})}^2 \\
& \lesssim \left\| e^{\tau \phi} \Delta u \right\|_{L^{2}(\Omega_{\theta}\setminus B_{1})}^{2}
\\
& \;\;\;\;\; +  \left\| e^{\tau \phi}|x|^{-1} |\nabla u| \right\|_{L^{2}(\Omega_{\theta}\setminus B_{1})}^{2}
+ \left\| e^{\tau \phi}|x|^{-2} u \right\|_{L^{2}(\Omega_{\theta}\setminus B_{1})}^{2}.
\end{align*} 
Adding both inequalities and noting $\frac{3 \alpha -4}{2}\geq -2$, $\frac{\alpha -2}{2}\geq -1$ for $\alpha \geq 0$, the error terms can be absorbed. This yields the desired estimate.
\end{proof}

\subsection[Proof of Proposition \ref{prop:backwaruniquness-}]{Proof of Proposition \ref{prop:backwaruniquness-} and an Non-Existence Proof of Harmonic Functions with Gaussian Decay Rates in Cones with Opening Angles Larger than $\frac{\pi}{2}$}
\label{sec:non}

In the sequel we assume $\alpha > \frac{4}{3}$, which translates into a condition on the opening angle of the domain: $\theta < \frac{3 \pi}{4}$. The backward uniqueness result for conical domains with larger opening angles immediately follows from this by restriction.
It is deduced from the \textit{elliptic} Carleman estimates by an application of the Laplace or a one-sided Fourier transform. Indeed, the $t$-independence of the coefficients of equation (\ref{eq:heat-}) and \v{S}ver\'ak's decay result, Lemma \ref{lem:Sverak}, lead to 
\begin{align*}
s\La{u}(s,x) + \D\La{u}(s,x) & = c_{1}(x)\La{u}(s,x) + c_{2}(x) \cdot \nabla\La{u}(s,x) \mbox{ in } \R\times \Omega_{\theta},\\
|\La{u}| & \leq Ce^{-\beta |x|^2} \mbox{ in } \R \times \Omega_{\theta}.
\end{align*} 
The backward uniqueness result is derived as a consequence of a Carleman estimate -- more precisely, of the elliptic estimate (\ref{eq:Carlel}). Keeping $s$ fixed and rescaling in $x$, it is possible to assume the ``smallness" condition:
\begin{equation}
\label{eq:uniq}
\begin{split}
| \D\La{u}(s,x)| & \leq \lambda(|\tilde{c}_{1}(x)||\La{u}(s,x)| + |c_{2}(x)||\nabla \La{u}(s,x)|) \mbox{ in } \R\times \Omega_{\theta},\\
|u| & \leq Ce^{-\beta \lambda^2 |x|^2} \mbox{ in } \R\times \Omega_{\theta},
\end{split}
\end{equation} 
with $\lambda \leq 1$ and $\tilde{c}_{1}=c_{1}+|s|$.
Using (smooth) cut-off functions which satisfy the following limiting behaviour
\begin{align*}
w_{1,R}(x_{1})&:=  \left\{ \begin{array}{ll}
				0, & x_{1}\leq R,\\
				1, & x_{1}\geq 2R,
			\end{array} \right.
w_{2}(s) :=  \left\{ \begin{array}{ll}
				0, & s \leq  - \frac{4}{3} ,\\
				1, & s \geq - \frac{1}{2},
			\end{array} \right.
\eta_{L}(r):=  \left\{ \begin{array}{ll}
				1, & r\leq L,\\
				0, & r\geq 2L,
			\end{array} \right. 
\end{align*}
we insert $v_{R,L}(x):=\La{u}(x)w_{1,R}(x_{1})w_{2}(\phi)\eta_{L}(|x|)$ into the Carleman inequality (\ref{eq:Carlel}). Recalling the decay condition on the function $\La{u}$ and invoking the dominated convergence theorem, we can pass to the limit $L\rightarrow \infty$. For $v_{R}:= \La{u}w_{1,R}w_{2}(\phi)$ we then obtain
\begin{align*}
\tau^{3}\left\| e^{\tau(\phi-C)} |x|^\frac{3\alpha -4}{2} v_{R} \right\|_{L^{2}(\Omega_{\theta}\setminus B_{1})}^2 + \tau \left\| e^{\tau (\phi-C)}|x|^{\frac{\alpha-2}{2}}\nabla v_{R} \right\|_{L^{2}(\Omega_{\theta}\setminus B_{1})}^2 \\
\lesssim \left\| e^{\tau (\phi-C)} \Delta v_{R} \right\|_{L^{2}(\Omega_{\theta}\setminus B_{1})}^{2}.
\end{align*}
Defining $\tilde{w}:= w_{1,R}w_{2}$, we inspect the right hand side of the inequality:
\begin{align*}
\D v_{R} = \tilde{w} \D \La{u} + 2 \nabla \tilde{w}\cdot \nabla \La{u} + \La{u} \D \tilde{w}.
\end{align*}
Combining this with inequality (\ref{eq:uniq}) and choosing $R\geq 1$ sufficiently large, yields
\begin{align}
\label{eq:error}
|\D v_{R}| \leq C\lambda (|x|^{\frac{3\alpha-4}{2}}|v_{R}|+  |x|^{\frac{\alpha-2}{2}}|\nabla v_{R}|) + 2 |\nabla \tilde{w}|| \nabla \La{u}| +  |\La{u}| |\D \tilde{w}|.
\end{align}
Thus, the first term on the right hand side can be absorbed into the left hand side of the Carleman inequality. The remaining right hand side terms in (\ref{eq:error}) are only active close to the boundary as well as at a spatial scale $\sim R$. With $R\sim 1$ and choosing $C>0$ in dependence of $R$, this leads to a right hand side term of the form
\begin{align*}
\left\| e^{\tau (\phi-C)} ( |\nabla \tilde{w}|| \nabla \La{u}| +  |\La{u}| |\D \tilde{w}| ) \right\|_{L^{2}(\Omega_{\theta}\setminus B_{1})}
\lesssim \left\| e^{-\tau \frac{C}{2}}e^{- \beta |x|^2} P_{\phi}(x) \right\|_{L^{2}(\Omega_{\theta}\setminus B_{1})},
\end{align*}
where $P_{\phi}$ denotes a function with at most polynomial growth. As a consequence, the right hand side term vanishes in the limit $\tau \rightarrow \infty$. Thus, the function $\La{u}$ must vanish on some open domain. By unique continuation this therefore implies that $\La{u}\equiv 0$ in the whole domain.
As this holds for all Laplace modes $s$, we obtain the desired result $\La{u} \equiv 0$, hence $u \equiv 0$.

\begin{rmk}
The Carleman estimate (\ref{eq:Carlel}) dictates the decay assumption on the potential $c_2$. Comparing exponents, we obtain
\begin{align*}
b(\theta) = \frac{\alpha-2}{2}
\end{align*}
for the exponent in Proposition \ref{prop:backwaruniquness-}.
\end{rmk}

We point out that the non-existence of harmonic functions with Gaussian decay rates in cones with opening angles greater than $\frac{\pi}{2}$ follows via our Carleman inequality (\ref{eq:Carlel}):

\begin{prop}
\label{prop:non-existence}
Let $\theta > \frac{\pi}{2}$.
Let $u:\Omega_{\theta}\rightarrow \R$ be a solution of
\begin{align*}
\D u &= c_{1}(x) u + c_{2}(x) \cdot \nabla u \mbox{ in } \Omega_{\theta},\\
|c_{2}(x)| & \leq \frac{1}{|x|^{b(\theta)}} \mbox{ in } \Omega_{\theta}, \ c_{1}\in L^{\infty},\\
|u| & \leq e^{-\beta|x|^2} \mbox{ in } \Omega_{\theta}.
\end{align*}
Then $u\equiv 0$.
\end{prop}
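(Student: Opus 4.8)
The plan is to apply the elliptic Carleman estimate (\ref{eq:Carlel}) directly to $u$, following almost verbatim the argument used in the proof of Proposition \ref{prop:backwaruniquness-}, with the simplification that no Laplace transform is needed since $u$ already solves the elliptic equation. First I would reduce to the regime $\frac{\pi}{2} < \theta < \frac{3\pi}{4}$, i.e. $\alpha = \frac{\pi}{\theta} > \frac{4}{3}$, in which the weighted exponents $\frac{3\alpha-4}{2}$ and $\frac{\alpha-2}{2}$ appearing in (\ref{eq:Carlel}) satisfy $\frac{3\alpha-4}{2} \geq -2$, $\frac{\alpha-2}{2} \geq -1$ and are thus admissible. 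For a larger opening angle one restricts $u$ to a subcone $\Omega_{\theta'}$ with $\frac{\pi}{2} < \theta' < \frac{3\pi}{4}$, proves $u \equiv 0$ there, and then invokes unique continuation to propagate the vanishing to all of $\Omega_\theta$.

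Next I would rescale in $x$, replacing $u$ by $u(\lambda\, \cdot\,)$ with $\lambda \leq 1$ small, in order to render the lower-order perturbation small relative to the principal part. Using $c_1 \in L^\infty$ and the decay $|c_2(x)| \leq |x|^{-b(\theta)}$ with $b(\theta)$ chosen so that $|x|^{-b(\theta)}$ is dominated by the weight $|x|^{\frac{\alpha-2}{2}}$ on the gradient term of (\ref{eq:Carlel}), the rescaled equation takes the form
\begin{align*}
|\D u| \leq C\lambda\big(|x|^{\frac{3\alpha-4}{2}}|u| + |x|^{\frac{\alpha-2}{2}}|\nabla u|\big) \quad \text{in } \Omega_\theta,
\end{align*}
while the Gaussian bound becomes $|u| \leq C e^{-\beta\lambda^2|x|^2}$. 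I would then cut off with the functions $w_{1,R}$, $w_2(\phi)$ and $\eta_L$ introduced earlier, set $v_{R,L} := u\, w_{1,R} w_2(\phi)\eta_L(|x|)$, and insert $v_{R,L}$ into (\ref{eq:Carlel}). Because $u$ decays like $e^{-\beta\lambda^2|x|^2}$ whereas the conjugating weight grows only like $e^{\tau\phi}\sim e^{\tau|x|^\alpha}$ with $\alpha < 2$, the dominated convergence theorem permits passing to the limit $L \to \infty$, yielding the estimate for $v_R := u\, w_{1,R} w_2(\phi)$.

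The heart of the argument is the treatment of the right-hand side $\|e^{\tau(\phi-C)}\D v_R\|$. Writing $\D v_R = \tilde w \D u + 2\nabla \tilde w\cdot\nabla u + u\,\D\tilde w$ with $\tilde w := w_{1,R} w_2(\phi)$, the bulk contribution $\tilde w \D u$ is controlled by the small-perturbation bound above and, after choosing $\lambda$ small and $R$ large, is absorbed into the left-hand side of the Carleman inequality exactly as in (\ref{eq:error}). The genuinely delicate terms are the commutator terms $\nabla\tilde w\cdot\nabla u$ and $u\,\D\tilde w$, which are supported only at the spatial scale $\sim R$ and, crucially, in the transition region of $w_2(\phi)$ near the boundary of the cone, where $\phi - C \leq -\frac{1}{2}$. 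There the prefactor satisfies $e^{\tau(\phi-C)} \leq e^{-\tau/2}$, and combined with the Gaussian decay of $u$ this produces a bound of the form $e^{-\tau C/2} e^{-\beta|x|^2} P_\phi(x)$ with $P_\phi$ of at most polynomial growth, which tends to zero as $\tau \to \infty$.

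Letting $\tau \to \infty$ then forces $u \equiv 0$ on the open set $\Omega_\theta \cap \{\phi - C \geq 0\}$, and a final application of unique continuation across the spatial boundaries yields $u \equiv 0$ everywhere. I expect the main obstacle to be the interplay at the cone boundary: the cutoff $w_2(\phi)$ is forced upon us by the complete absence of control on boundary data, and it is precisely the subharmonicity (elliptic pseudoconvexity, $\D\phi \geq 0$) of $\phi(x,y) = \Re((x+iy)^\alpha) + \epsilon x^\alpha$ — valid for $\alpha = \frac{\pi}{\theta}$ exactly when $\theta > \frac{\pi}{2}$ — that allows this boundary-vanishing weight to drive the estimate. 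The competition between the subquadratic weight growth $|x|^\alpha$ and the Gaussian decay $|x|^2$ is what makes both the $L\to\infty$ limit and the error terms vanish, so the restriction $\alpha < 2$, equivalently the \emph{Gaussian} (rather than merely exponential) decay hypothesis on $u$, is essential.
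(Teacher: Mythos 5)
Your proposal is correct and follows essentially the same route as the paper, whose own proof simply defers to the argument for Proposition \ref{prop:backwaruniquness-} with the Laplace transform step omitted: restriction to a subcone with $\alpha>\frac{4}{3}$, rescaling, cutoff, the elliptic Carleman estimate (\ref{eq:Carlel}), absorption of the bulk term, decay of the cutoff error terms as $\tau\to\infty$, and unique continuation. The only minor imprecision is your stated reason for requiring $\alpha>\frac{4}{3}$ --- the inequalities $\frac{3\alpha-4}{2}\geq -2$ and $\frac{\alpha-2}{2}\geq -1$ hold for all $\alpha\geq 0$; the restriction is rather what makes the weight $|x|^{\frac{3\alpha-4}{2}}$ non-decaying so that the bounded potential $c_{1}$ can be absorbed into the left-hand side.
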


\begin{proof}
This follows along the lines of the proof of Proposition \ref{prop:backwaruniquness-} (after having carried out the Laplace transform). 
\end{proof}

\bibliography{citations1,citations}
\bibliographystyle{plain}

\end{document}